\definecolor{ffqqqq}{rgb}{1.,0.,0.}
\definecolor{qqzzff}{rgb}{0.,0.6,1.}
\newtheorem{theorem}{Theorem}[section]
\newtheorem{lemma}[theorem]{Lemma}
\newtheorem{proposition}[theorem]{Proposition}
\newtheorem{corollary}[theorem]{Corollary}
\theoremstyle{definition}
\newtheorem{remark}[theorem]{Remark}
\numberwithin{equation}{section}
\DeclareMathOperator{\re}{Re}
\DeclareMathOperator{\rank}{rank}
\DeclareMathOperator{\conv}{Conv}
\DeclareMathOperator{\diam}{diam}
\DeclareMathOperator{\vol}{Vol}
\begin{document}

\title[Wave equation on locally symmetric spaces]{Wave and Klein-Gordon equations \\ on certain locally symmetric spaces}

\author{Hong-Wei ZHANG}

\address{
Institut Denis Poisson (UMP 7013),
Université d’Orléans, Université de Tours \& CNRS,
Bâtiment de mathématiques 
- Rue de Chartres B.P. 6759 
- 45067 Orléans cedex 2  
- FRANCE
}

\email{hong-wei.zhang@univ-orleans.fr}

\thanks{This work is part of the author's Ph.D. thesis, supervised by J.-Ph. Anker and N. Burq.}

\thanks{The author thanks Marc Peigné for helpful hints about convex cocompact subgroups and Michel Marias for stimulating discussions.}

\subjclass[2000]{35Q55, 43A85, 22E30, 35P25, 47J35, 58D25}

\keywords{Locally symmetric space, wave operator, dispersive estimate, semilinear wave equation, semilinear Klein-Gordon equation}

\begin{abstract}
This paper is devoted to study the dispersive properties of the linear Klein-Gordon and wave equations on a class of locally symmetric spaces. As a consequence, we obtain the Strichartz estimate and prove global well-posedness results for the corresponding semilinear equation with low regularity data as on real hyperbolic spaces.
\end{abstract}

\maketitle
\setcounter{tocdepth}{1}

\section{Introduction}
Let $M$ be a Riemannian manifold and denote by $\Delta$ the Laplace-Beltrami operator on $M$. The theory is well established for the following wave equation on $M= \mathbb{R}^n$,
\begin{align}\label{wave equation}
\begin{cases}
\partial_{t}^2 u(t,x)- \Delta u(t,x) = F(t,x), \\
u(0,x) =f(x) , \ \partial_{t}|_{t=0} u(t,x) =g(x),
\end{cases}
\end{align}
where the solutions $u$ satisify the Strichartz estimates:
\footnote{The symbol $\lesssim$, let us recall, means precisely that there exists a constant $0 < C < + \infty$ such that $\left\| \nabla_{\mathbb{R} \times \mathbb{R}^n} u \right\|_{L^p(I; H^{-\sigma,q}(\mathbb{R}^n))} \le C \left(  \left\| f \right\|_{H^1(\mathbb{R}^n)} + \left\| g \right\|_{L^2(\mathbb{R}^n)} + \left\| F \right\|_{L^{\tilde{p}'}(I; H^{\tilde{\sigma},\tilde{q}'}(\mathbb{R}^n))} \right)$, where $\tilde{p}'$ is the dual exponent of $\tilde{p}$, defined by the formula $\frac{1}{\tilde{p}} + \frac{1}{\tilde{p}'} = 1$, so is $\tilde{q}'$.}
\begin{align*}
\left\| \nabla_{\mathbb{R} \times \mathbb{R}^n} u \right\|_{L^p(I; H^{-\sigma,q}(\mathbb{R}^n))} \lesssim \left\| f \right\|_{H^1(\mathbb{R}^n)} + \left\| g \right\|_{L^2(\mathbb{R}^n)} + \left\| F \right\|_{L^{\tilde{p}'}(I; H^{\tilde{\sigma},\tilde{q}'}(\mathbb{R}^n))},
\end{align*}
on any interval $I \subseteq \mathbb{R}$ under the assumptions
\begin{align*}
\sigma = \frac{n+1}{2} \left( \frac{1}{2} - \frac{1}{q} \right), \ \widetilde{\sigma} = \frac{n+1}{2} \left( \frac{1}{2} - \frac{1}{\tilde{q}} \right),
\end{align*}
and the couples $(p,q), (\tilde{p}, \tilde{q}) \in (2, +\infty ] \times [2, 2 \frac{n-1}{n-3})$ fulfill the admissibility conditions:
\begin{align*}
\frac{1}{p} = \frac{n-1}{2} \left( \frac{1}{2} - \frac{1}{q} \right), \ \frac{1}{\tilde{p}} = \frac{n-1}{2} \left( \frac{1}{2} - \frac{1}{\tilde{q}} \right).
\end{align*}

These estimates serve as a tool for finding minimal regularity conditions on the initial data ensuring well-posedness for corresponding semilinear wave equations, which is addressed in \cite{kapitanski1994weak}, and almost fully answered in \cite{dancona2001weighted, georgiev1997weighted, keel1998endpoint, lindblad1995existence}.

Analogous results have been found for the Klein-Gordon equation 
\begin{align}\label{KG equation}
\begin{cases}
\partial_{t}^2 u(t,x)- \Delta u(t,x) + cu(t,x)= F(t,x), \\
u(0,x) =f(x) , \ \partial_{t}|_{t=0} u(t,x) =g(x).
\end{cases}
\end{align}
with $c=1$, see \cite{bahouri1999high, ginibre1995generalized, machihara2004small, nakanishi1999scattering}.

Given the rich Euclidean theory, it is natural to look at the corresponding equations on more general manifolds. We consider in the present paper a class of noncompact locally symmetric spaces $M$, on which we study the Klein-Gordon equation \eqref{KG equation} with $c \ge -\rho^2$, where $\rho$ is a positive constant depending on the structure of $M$ and defined in the next section. Due to large-scale dispersive effects in negative curvature, we expect stronger results than in the Euclidean setting, as on real hyperbolic space, see \cite{anker2014wave, anker2012wave}.

In the critical case $c = -\rho^2$, \eqref{KG equation} is called the shifted wave equation. To our knowledge, it was first considered in \cite{fontaine1994equation, fontaine1997semilinear} in low dimensions $n=2$ and $n=3$. In \cite{anker2012wave, anker2015wave}, a detailed analysis of the shifted wave equation was carried out on real hyperbolic spaces and on Damek-Ricci spaces, which contains all rank one symmetric spaces of noncompact type. In the non-shifted case $c > -\rho^2$, similar results on real hyperbolic spaces were obtained in \cite{anker2014wave}.

In the recent paper \cite{fotiadis2017schrodinger}, the Schrödinger equation was  considered on certain locally symmetric spaces. In the present paper, we study the wave and Klein-Gordon equations in the same spirit.

\subsection{Notations} { \ } \par
We adopt the standard notation (see for instance \cite{helgason1984groups}, \cite{bunke2000spectrum}). Let $G$ be a semisimple Lie group, connected, noncompact, with finite center, and $K$ be a maximal compact subgroup of $G$. The homogenous space $X=G/K$ is a Riemannian symmetric space of  noncompact type, whose dimension is denoted by $n$. Let $\mathfrak{g} = \mathfrak{k} \oplus \mathfrak{p}$ be the Cartan decomposition of its Lie algebra. The Killing form of $\mathfrak{g}$ induces a $K$-invariant inner product on $\mathfrak{p}$, and hence a $G$-invariant Riemannian metric on $G/K$. 

Fix a maximal abelian subspace $\mathfrak{a}$ in $\mathfrak{p}$. The symmetric space $X$ is said to have rank one if $\dim \mathfrak{a} = 1$. Denote by $\mathfrak{a}^*$ the real dual of $\mathfrak{a}$, let $\Sigma \subset \mathfrak{a}^*$ be the root system of $(  \mathfrak{g}, \mathfrak{a})$ and denote by $W$ the Weyl group associated to $\Sigma$. Choose a set $\Sigma^{+}$ of positive roots, let $\mathfrak{a}^{+} \subset \mathfrak{a}$ be the corresponding positive Weyl chamber and $\overline{\mathfrak{a}^{+}}$ its closure. Denote by $\rho$ the half sum of positive roots counted with their multiplicities:
\begin{align*}
\rho = \frac{1}{2} \sum_{\alpha \in \Sigma^{+}} m_{\alpha} \alpha ,
\end{align*}
where $m_{\alpha}$ is the dimension of root space $\mathfrak{g}_{\mathfrak{a}} = \left\lbrace Y \in \mathfrak{g} \ | \left[ {H,Y } \right] = \alpha(H)Y, \forall H \in \mathfrak{a} \ \right\rbrace$.

Let $\Gamma$ be a discrete torsion-free subgroup of $G$. The locally symmetric space $M=\Gamma \backslash X$, equipped with the Riemannian structure inherited from $X$ becomes a Riemannian manifold. We say that $M$ has rank one if $X$ has rank one. Moreover $\Gamma$ is called convex cocompact if the quotient group $\Gamma \backslash \conv (\Lambda_{\Gamma})$ is compact, where $\conv (\Lambda_{\Gamma})$ is the convex hull of the limit set $\Lambda_{\Gamma}$ of $\Gamma$. We denote by $\Delta$ the Laplace-Beltrami operator, by $d(\cdot , \cdot )$ the Riemannian distance, and by $dx$ the associated measure, both on $X$ and $M$. Consider the Poincaré series
\begin{align*}
P(s;x,y) = \sum_{\gamma \in \Gamma} e^{-s d(x, \gamma y)}, \ s>0, \ x,y \in X,
\end{align*}
and denote by $\delta(\Gamma)$ its critical exponent:
\begin{align*}
\delta(\Gamma) = \inf \left\lbrace s>0 \ | \ P(s;x,y) < + \infty \right\rbrace .
\end{align*}

\subsection{Assumptions}\label{subsection assumptions} { \ } \par
In this paper, $M=\Gamma \backslash X$ is a rank one locally symmetric space such that $\Gamma$ is convex cocompact and $\delta(\Gamma) < \rho$.

Let us comment a few words on these assumptions. Wave type equations on noncompact rank one symmetric spaces are well understood. Sharp pointwise estimates of wave kernels on $X$(see Section \ref{subsection wave kernel}), which were obtained in \cite{ anker2014wave, anker2012wave}, allow us to deal with wave kernels on a locally symmetric space $M$. Notice that such information is lacking in higher rank.

The rank one symmetric spaces of the noncompact type are the hyperbolic spaces $H^n({\mathbb{F}})$ with $\mathbb{F} = \mathbb{R}, \mathbb{C}, \mathbb{H}$ or $H^2({\mathbb{O}})$. In particular, we have $\mathfrak{a}^{*} = \mathfrak{a}$ and $\mathfrak{a}^{+} \cong \mathbb{R}_{+}^{*}$, hence $\rho$ is just a positive constant depending on the structure of $X$. Specifically, as a direct consequence of the assumption $\delta(\Gamma) < \rho$, the series \eqref{wave kernel on M} defining the wave kernel on $M$ is absolutely convergent, see Proposition \ref{proposition wave kernel on M}. In addition, according to \cite{corlette1990hausdorff}, the bottom $\lambda_0$ of the $L^2$-spectrum of $-\Delta$ on $M$ is equal to $\rho^2$, as on $X$. Consequently, we obtain an analogous $L^2$ Kunze-Stein phenomenon on $M$ without further assumptions, see Proposition \ref{Kunze-Stein}. Notice that $\lambda_0 = \rho^2 > 0$ implies $\vol (M) = + \infty$, while $\lambda_0  = 0$ if $M$ is a lattice.

At last, the convex cocompactness assumption implies a uniform upper bound of the Poincaré series, see Lemma \ref{estimate of Poincaré series}, which is crucial for the $L^1 \rightarrow L^{\infty}$ boundedness of wave propagators on $M$.

\begin{remark}
The Schrödinger equation is studied in \cite{fotiadis2017schrodinger} under slightly different assumptions, our well-posedness results hold also in that setting.
\end{remark}
\subsection{Statement of the results} { \ } \par
Consider the operator $D= \sqrt{-\Delta - \rho^2 + \kappa^2}$ with $\kappa >0$, then the Klein-Gordon equations \eqref{KG equation} becomes
\begin{align}\label{KG equation D}
\begin{cases}
\partial_{t}^2 u(t,x) + D_{x}^2 u(t,x) = F(t,x), \\
u(0,x) =f(x) , \ \partial_{t}|_{t=0} u(t,x) =g(x).
\end{cases}
\end{align}
with $c = \kappa^2 - \rho^2 > - \rho^2$. Notice that \eqref{KG equation D} is the wave equation when $\kappa = \rho$ and becomes the shifted wave equation in the limit case $ \kappa = 0$. Consider another operator $\widetilde{D} = \sqrt{-\Delta - \rho^2 + \widetilde{\kappa}^2}$ with $\widetilde{\kappa} > \rho$. We denote by $\omega_{t}^{\sigma}$ the radial convolution kernel of the wave operator $ W_{t}^{\sigma}:= \widetilde{D}^{-\sigma} e^{it D}$ on the symmetric space $X$:
\begin{align}\label{wave kernel on X}
W_{t}^{\sigma} f(x) = f*\omega_{t}^{\sigma}(x) = \int_{G} \omega_{t}^{\sigma}(y^{-1}x)f(y) dy,
\end{align}
where $f$ is any reasonable function on $X$, see Section \ref{subsection wave kernel} for more details. By $K$-bi-invariance of the kernel $\omega_{t}^{\sigma}$, we deduce that $W_{t}^{\sigma} f$ is left $\Gamma$-invariant and right $K$-invariant if $f$ is defined on the locally symmetric space $M$. Thus the wave operator on $M$, denoted by $\widehat{W_{t}^{\sigma}}$, is also defined by \eqref{wave kernel on X}. Consider the wave kernel $\widehat{\omega_{t}^{\sigma}}$ on $M$, which is given by
\begin{align*}
\widehat{\omega_{t}^{\sigma}}(x,y) = \sum_{\gamma \in \Gamma} \omega_{t}^{\sigma} (y^{-1} \gamma x), \ \forall x,y \in X.
\end{align*}
Then the wave operator $\widehat{W_{t}^{\sigma}}$ on $M$ is an integral operator:
\begin{align*}
\widehat{W_{t}^{\sigma}} f(x) = \int_{M} \widehat{\omega_{t}^{\sigma}}(x,y) f(y) dy,
\end{align*}
see Proposition \ref{wave kernel on M}. The aim of this paper is to prove the following dispersive properties:
\begin{theorem}\label{dispersive estimate}
For $n \ge 3$, $2 < q< +\infty$ and $\sigma \ge (n+1) \left( \frac{1}{2} - \frac{1}{q} \right)$,
\begin{align}
\left\| \widehat{W_{t}^{\sigma}} \right\|_{L^{q'}(M) \rightarrow L^q(M)} \lesssim
\begin{cases}
|t|^{-(n-1)\left( \frac{1}{2} - \frac{1}{q} \right)} \quad &if \ 0< |t| < 1, \\
|t|^{-\frac{3}{2}} \quad &if \ |t| \ge 1.
\end{cases}
\end{align}
\end{theorem}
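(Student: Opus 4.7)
The plan is to lift the dispersive estimate on $M$ to the sharp pointwise bounds for the wave kernel $\omega_t^\sigma$ on the covering symmetric space $X$ (recalled in Section \ref{subsection wave kernel}), sum these bounds over $\Gamma$ by means of the Poincaré-series estimate of Lemma \ref{estimate of Poincaré series}, and exploit the Kunze--Stein inequality of Proposition \ref{Kunze-Stein} in the large-time regime. The two time regimes will be handled separately, as the mechanisms responsible for decay are quite different.

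For small times $0<|t|<1$, the first step is to establish the endpoint $L^1(M)\to L^\infty(M)$ bound. The small-time pointwise estimate on $X$ gives $|\omega_t^\sigma(x)|\lesssim |t|^{-(n-1)/2}$ on a neighbourhood of the origin, with fast decay outside, as soon as $\sigma\ge(n+1)/2$. Inserting this into $\widehat{\omega_t^\sigma}(x,y)=\sum_{\gamma\in\Gamma}\omega_t^\sigma(y^{-1}\gamma x)$ and using Lemma \ref{estimate of Poincaré series} one obtains $\|\widehat{\omega_t^\sigma}\|_{L^\infty(M\times M)}\lesssim|t|^{-(n-1)/2}$, hence the endpoint bound. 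Combining it with the trivial $L^2\to L^2$ bound, uniform in $t$ because $\widetilde{D}^{-\sigma}e^{itD}$ is a bounded spectral multiplier (thanks to $\widetilde{\kappa}>\rho$ and the fact that the bottom of the spectrum on $M$ equals $\rho^2$), Riesz--Thorin interpolation yields the claim for all $2<q<\infty$ under the condition $\sigma\ge(n+1)(1/2-1/q)$.

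For large times $|t|\ge 1$, I would invoke the refined large-time pointwise estimate on $X$, which roughly is of the form $|\omega_t^\sigma(x)|\lesssim |t|^{-3/2}e^{-\rho\,d(x,o)}$ (up to polynomial corrections and factors built from the Harish-Chandra spherical function). Unfolding the sum defining $\widehat{\omega_t^\sigma}$ converts the $L^{q'}(M)\to L^q(M)$ estimate for $\widehat{W_t^\sigma}$ into a convolution-type inequality on $X$, to which the $L^2$ Kunze--Stein phenomenon on $M$ (Proposition \ref{Kunze-Stein}, available because $\lambda_0(M)=\rho^2$) applies. Interpolating then reaches the whole range $2<q<\infty$. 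The convergence of $\sum_{\gamma\in\Gamma}e^{-\rho\,d(x,\gamma y)}$ required by this scheme is exactly provided by $\delta(\Gamma)<\rho$ combined with convex cocompactness.

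The hard part will be the large-time estimate. Obtaining the rate $|t|^{-3/2}$ independently of $q$ is a delicate feature of the Plancherel density near the bottom of the spectrum, and transferring it to $M$ demands a careful interplay between the Kunze--Stein bound on $M$ and the pointwise kernel bound on $X$, all while keeping the Poincaré sum uniform in the base points. Convex cocompactness delivers that uniformity, the strict inequality $\delta(\Gamma)<\rho$ provides the slack needed for absolute convergence against the exponential factor $e^{-\rho\,d}$, and the Kunze--Stein machinery converts this into the announced $L^{q'}\to L^q$ bound; orchestrating these three ingredients is where most of the work is expected.
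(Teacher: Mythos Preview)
Your overall architecture---pointwise kernel bounds on $X$, Poincar\'e-series control via convex cocompactness, Kunze--Stein on $M$, interpolation---is the right one, but there is a genuine gap in the interpolation step. You invoke Riesz--Thorin between the $L^2\to L^2$ estimate (valid at $\sigma=0$) and the $L^1\to L^\infty$ estimate (which needs $\sigma\ge(n+1)/2$), claiming this yields the result for $\sigma\ge(n+1)(\tfrac12-\tfrac1q)$. But Riesz--Thorin interpolates a \emph{fixed} operator between two pairs of spaces; it cannot let $\sigma$ vary with $q$. Applied at the fixed value $\sigma=(n+1)/2$ it only proves the $L^{q'}\to L^q$ bound for that value of $\sigma$, strictly larger than what the theorem asserts. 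To slide $\sigma$ with $q$ one must use Stein's interpolation for the analytic family $\sigma\mapsto\widetilde D^{-\sigma}e^{itD}$ on the strip $0\le\Re\sigma\le(n+1)/2$, and for this the high-frequency kernel has to be regularised: the paper introduces
\[
\widetilde W_t^{\sigma,\infty}=\frac{e^{\sigma^2}}{\Gamma\bigl(\tfrac{n+1}{2}-\sigma\bigr)}\,\chi_\infty(D)\,\widetilde D^{-\sigma}e^{itD}
\]
precisely so that the pointwise bound $|\widetilde\omega_t^{\sigma,\infty}(r)|\lesssim|t|^{-(n-1)/2}e^{-\rho r}$ holds \emph{uniformly} along the line $\Re\sigma=(n+1)/2$, making the family admissible for Stein's theorem.

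A related omission is the low/high-frequency splitting $\omega_t^\sigma=\omega_t^{\sigma,0}+\omega_t^{\sigma,\infty}$. The low-frequency piece is not handled by complex interpolation at all but directly by the bilinear Lemma~\ref{B(f,g)}, which already packages Kunze--Stein together with the $L^1\to L^\infty$ Poincar\'e bound into an $L^{q'}\to L^q$ estimate for convolution by a weighted radial kernel. This is where the rate $|t|^{-3/2}$ actually emerges in the large-time regime (from the region $r\le|t|/2$, via the second line of Theorem~\ref{pointwise kernel estimate}); the complementary region $r>|t|/2$ gives arbitrary polynomial decay. Your large-time sketch (``Kunze--Stein then interpolate'') misses this structure: Kunze--Stein enters through Lemma~\ref{B(f,g)} applied to $\omega_t^{\sigma,0}$, while Stein's analytic interpolation handles $\widetilde\omega_t^{\sigma,\infty}$ separately.
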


\begin{remark}
In dimension $n=2$, there is an additional logarithmic factor in the small time bound, which becomes $|t|^{- \left( \frac{1}{2} - \frac{1}{q} \right)} (1- \log |t| )^{1-\frac{2}{q}}$, see Theorem \ref{pointwise kernel estimate} in the next section.
\end{remark}

\begin{remark}
At the endpoint $q=2$, $t \mapsto e^{itD}$ is a one-parameter group of unitary operators on $L^2(M)$.
\end{remark}

By applying the classical $TT^*$ method and by using the previous dispersive properties, we obtain the Strichartz estimate
\begin{align*}
\left\| \nabla_{\mathbb{R} \times M} u \right\|_{L^p(I; H^{-\sigma,q}(M))} \lesssim \left\| f \right\|_{H^1(M)} + \left\| g \right\|_{L^2(M)} + \left\| F \right\|_{L^{\tilde{p}'}(I; H^{\tilde{\sigma},\tilde{q}'}(M))}
\end{align*}
for the solutions $u$ of \eqref{KG equation D}, see Section \ref{section applcations} for more information about the Sobolev spaces $H^{-\sigma,q}(M)$. Here $I \subset \mathbb{R}$ is any time interval, possibly unbounded, 
\begin{align*}
\sigma \ge \frac{n+1}{2} \left( \frac{1}{2} - \frac{1}{q} \right), \ \widetilde{\sigma} \ge \frac{n+1}{2} \left( \frac{1}{2} - \frac{1}{\tilde{q}} \right),
\end{align*}
and the couples $(p,q)$ and $(\tilde{p}, \tilde{q})$ are admissible, which means that $\left( \frac{1}{p}, \frac{1}{q} \right)$, $\left( \frac{1}{\tilde{p}}, \frac{1}{\tilde{q}} \right)$ belong, in dimension $n \ge 4$ (see Section \ref{section applcations} for the lower dimensions) to the triangle
\begin{align*}
\left\lbrace \left( \frac{1}{p}, \frac{1}{q} \right) \in \left( 0, \frac{1}{2} \right) \times \left( 0, \frac{1}{2} \right) \ \Big| \ \frac{1}{p} \ge \frac{n-1}{2} \left( \frac{1}{2} - \frac{1}{q} \right) \right\rbrace \bigcup \left\lbrace  \left( 0, \frac{1}{2} \right),  \left( \frac{1}{2}, \frac{1}{2} - \frac{1}{n-1} \right) \right\rbrace.
\end{align*}
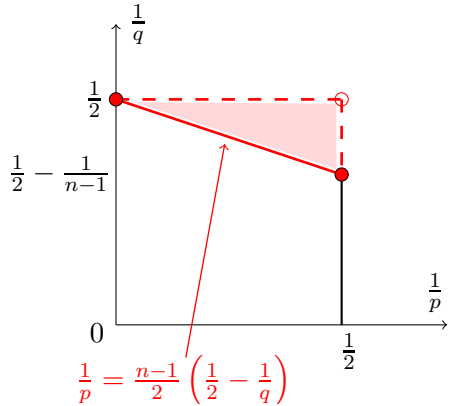
\begin{figure}[h]
\centering
\definecolor{ffqqqq}{rgb}{1.,0.,0.}
\begin{tikzpicture}[scale=0.5][line cap=round,line join=round,>=triangle 45,x=1.0cm,y=1.0cm]
\draw[->,color=black] (0.,0.) -- (8.805004895364174,0.);
\foreach \x in {,2.,4.,6.,8.,10.}
\draw[shift={(\x,0)},color=black] (0pt,-2pt);
\draw[color=black] (8,0.08808504628984362) node [anchor=south west] {\large $\frac{1}{p}$};
\draw[->,color=black] (0.,0.) -- (0.,8);
\foreach \y in {,2.,4.,6.,8.}
\draw[shift={(0,\y)},color=black] (-2pt,0pt);
\draw[color=black] (0.11010630786230378,8) node [anchor=west] {\large $\frac{1}{q}$};
\clip(-5,-2.1) rectangle (11.805004895364174,9.404473957611293);
\fill[line width=2.pt,color=ffqqqq,fill=ffqqqq,fill opacity=0.15000000596046448] (0.45855683542994374,5.928142080369191) -- (5.85410415683891,5.8847171522290775) -- (5.864960388873938,4.136863794589543) -- cycle;
\draw [line width=1.pt,dash pattern=on 5pt off 5pt,color=ffqqqq] (0.,6.)-- (6.,6.);
\draw [line width=1.pt,dash pattern=on 5pt off 5pt,color=ffqqqq] (6.,6.)-- (6.,4.);
\draw [line width=1.pt,color=ffqqqq] (6.,4.)-- (0.,6.);
\draw [line width=0.8pt] (6.,4.)-- (6.,0.);
\draw [->,line width=0.5pt,color=ffqqqq] (1.8569381602310457,-0.8701903243695805) -- (2.891937454136701,4.811295161325333);
\begin{scriptsize}
\draw [fill=ffqqqq] (0.,6.) circle (5pt);
\draw[color=black] (-0.5709441083587729,6) node {\large $\frac{1}{2}$};
\draw[color=black] (-1.5,4) node {\large $\frac{1}{2}- \frac{1}{n-1}$};
\draw[color=black] (-0.5,-0.2) node {\large $0$};
\draw[color=black] (6.2,-0.7) node {\large $\frac{1}{2}$};
\draw[color=ffqqqq] (1.8,-1.5) node {\large $\frac{1}{p} = \frac{n-1}{2} \left( \frac{1}{2} - \frac{1}{q} \right)$};
\draw [color=ffqqqq] (6.,6.) circle (5pt);
\draw [fill=ffqqqq] (6.,4.) circle (5pt);
\end{scriptsize}
\end{tikzpicture}
\caption{Admissibility in dimension $n \ge 4$.}
\end{figure}

Notice that the admissible set for $M$ is larger than the admissible set for $\mathbb{R}^n$ which corresponds only to the lower edge of the triangle. In comparison with $X$, we loose the right edge of the triangle, which corresponds to the critical case $\frac{1}{p} = \frac{1}{2}$ and $\frac{1}{q} > \frac{1}{2} - \frac{1}{n-1}$, this will be explained in Section \ref{section applcations}. Notice that we obtain nevertheless the same well-posedness results as on $X$.

This paper is organized as follows. In Section \ref{section preliminairies}, we review spherical analysis on noncompact symmetric spaces, and recall pointwise estimates of wave kernels on rank one symmetric space obtained in \cite{anker2014wave}. In Section \ref{section main section}, after proving the necessary lemmas, we prove the dispersive estimate by an interpolation argument. As a consequence, we deduce the Strichartz estimate and obtain well-posedness results for the semilinear Klein-Gordon equation in Section \ref{section applcations}.

\section{Preliminairies}\label{section preliminairies}

\subsection{Spherical analysis on noncompact symmetric spaces}\label{subsection spherical analysis} { \ } \par

We review in this section some elementary facts about noncompact symmetric spaces. We refer to \cite{anker1990Lp, anker1996spherical, faraut1982analyse, helgason1984groups} for more details. 

Recall that $\overline{\mathfrak{a}^{+}}$ is the closure of the positive Weyl chamber $\mathfrak{a}^{+}$. Denote by $\mathfrak{n} = \sum_{\alpha \in \Sigma^{+}} \mathfrak{g}_{\alpha}$ the nilpotent Lie subalgebra of $\mathfrak{g}$ associated with $\Sigma^{+}$, and by $N$ the corresponding Lie subgroup of $G$. Then we have the following two decompositions of $G$:
\begin{align*}
\begin{cases}
G= N \left( \exp \mathfrak{a} \right) K \quad &(Iwasawa), \\
G= K \left( \exp \overline{\mathfrak{a}{+}} \right) K \quad &(Cartan).
\end{cases}
\end{align*}

In the Cartan decomposition, the Haar measure on $G$ writes
\begin{align}\label{Cartan decomposition for Haar measure}
\int_G f(g) dg = const. \int_{K} dk_1 \int_{\mathfrak{a}_{+}} \prod_{\alpha \in \Sigma^{+}} \left( \sinh \alpha(H)  \right )^{m_{\alpha}}  dH \int_{K} f(k_1 (\exp H) k_2 ) dk_2.
\end{align}
In the rank one case, which we consider in this paper,
\begin{align*}
\int_{\mathfrak{a}_{+}} \prod_{\alpha \in \Sigma^{+}} \left( \sinh \alpha(H)  \right )^{m_{\alpha}} dH = const. \int_{0}^{+ \infty} \left( \sinh r \right)^{m_{\alpha}} \left( \sinh 2r \right)^{m_{2 \alpha}} dr,
\end{align*}
where
\begin{align}\label{estimate of Haar measure}
\left( \sinh r \right)^{m_{\alpha}} \left( \sinh 2r \right)^{m_{2 \alpha}} \lesssim e^{2 \rho r}, \quad \forall r>0.
\end{align}

Denote by $\mathcal{S}(K \backslash G /K)$ the Schwartz space of $K$-bi-invariant functions on $G$. The spherical Fourier transform $\mathcal{H}$ is defined by
\begin{align*}
\mathcal{H} f (\lambda) = \int_{G} f(x) \varphi_{\lambda} (x) dx, \ \forall \lambda \in \mathfrak{a}^{*} \cong \mathbb{R}, \ \forall f \in \mathcal{S} (K \backslash G/K).
\end{align*}
Here $\varphi_{\lambda} \in \mathcal{C}^{\infty} (K \backslash G/K)$ is a spherical function, which can be characterized as a radial eigenfunction of the negative Laplace-Beltrami operator  $- \Delta$ satisfying 
\begin{equation}\label{eigenfunction}
\begin{cases}
- \Delta \varphi_{ \lambda} (x) = \left(  \lambda^2 + \rho^2 \right) \varphi_{\lambda}(x), \\
\varphi_{\lambda}(e) =1.
\end{cases}
\end{equation}
In the noncompact case, the spherical function is characterized by
\begin{align}\label{spherical function on noncompact spaces}
\varphi_{\lambda} (x) = \int_{K} e^{(i \lambda + \rho)A(kx)} dk, \ \lambda \in {\mathfrak{a}_{\mathbb{C}}^*},
\end{align}
where $A(kx)$ is the unique $\mathfrak{a}$-component in the Iwasawa decomposition of $kx$.

Denote by $\mathcal{S} \left( \mathfrak{a}^* \right)^{W}$ the subspace of $W$-invariant functions in the Schwartz space $\mathcal{S} \left( \mathfrak{a}^* \right)$. Then $\mathcal{H}$ is an isomorphism between $\mathcal{S}(K \backslash G /K)$ and $\mathcal{S} \left( \mathfrak{a}^* \right)^{W}$. The inverse spherical Fourier transform is defined by
\begin{align*}
f (x) = const. \int_{\mathfrak{a}^{*}} \mathcal{H}f(\lambda) \varphi_{- \lambda} (x) |\mathbf{c(\lambda)}|^{-2} d \lambda, \ \forall x \in G , \ \forall f  \in \mathcal{S} (\mathfrak{a}^{*})^{W},
\end{align*}
where $\mathbf{c(\lambda)}$ is the Harish-Chandra $\mathbf{c}$-function. 

\subsection{Pointwise estimates of the wave kernel on symmetric spaces}\label{subsection wave kernel} { \ } \par
We recall in this section the pointwise wave kernel estimates on rank one symmetric space obtained in \cite{anker2014wave} and \cite{anker2015wave}. Via the spherical Fourier transform and \eqref{eigenfunction}, the negative Laplace-Beltrami operator $- \Delta$ corresponds to $\lambda^2 + \rho^2$, hence the operators $D= \sqrt{-\Delta - \rho^2 + \kappa^2}$ and $\widetilde{D} = \sqrt{-\Delta - \rho^2 + \widetilde{\kappa}^2}$ to
\begin{align*}
\sqrt{\lambda^2 + \kappa^2} \quad {and} \quad \sqrt{\lambda^2 + \tilde{\kappa}^2}.
\end{align*}
By the inverse spherical Fourier transform, the radial convolution kernel $\omega_{t}^{\sigma}$ of $ W_{t}^{\sigma}= \widetilde{D}^{-\sigma} e^{it D}$ on $X$ is given by
\begin{align*}
\omega_{t}^{\sigma}(r) = const. \int_{- \infty}^{+ \infty} (\lambda^2 + \tilde{\kappa}^2 )^{-\frac{\sigma}{2}} e^{it \sqrt{\lambda^2 + \kappa^2}} \varphi_{\lambda}(r) |\mathbf{c} (\lambda) |^{-2} d \lambda
\end{align*}
for suitable exponents $\sigma \in \mathbb{R}$. Consider smooth even cut-off functions $\chi_0$ and $\chi_{\infty}$ on $\mathbb{R}$ such that
\begin{align*}
\begin{cases}
\chi_{0}(\lambda) + \chi_{\infty}(\lambda) =1, \\
\chi_{0}(\lambda) = 1, \quad \forall |\lambda| \le 1, \\
\chi_{\infty}(\lambda) =1, \quad \forall |\lambda| \ge 2.
\end{cases}
\end{align*}
Let us split up
\begin{align*}
\omega_{t}^{\sigma} (r) =& \omega_{t}^{\sigma,0} (r) + \omega_{t}^{\sigma, \infty} (r) \\
=& \ const. \int_{- \infty}^{+ \infty} \chi_{0}(\lambda)  (\lambda^2 + \tilde{\kappa}^2 )^{-\frac{\sigma}{2}} e^{it \sqrt{\lambda^2 + \kappa^2}} \varphi_{\lambda}(r) |\mathbf{c} (\lambda) |^{-2} d \lambda \\
+& \ const. \int_{- \infty}^{+ \infty} \chi_{\infty}(\lambda) (\lambda^2 + \tilde{\kappa}^2 )^{-\frac{\sigma}{2}} e^{it \sqrt{\lambda^2 + \kappa^2}} \varphi_{\lambda}(r) |\mathbf{c} (\lambda) |^{-2} d \lambda.
\end{align*}
To avoid possible singularities of the kernel $\omega_{t}^{\sigma, \infty}$, see \cite[Chap 9]{stein1993harmonic}, we consider the analytic family of operators
\begin{align}\label{analytic operator}
\widetilde{W}_{t}^{\sigma, \infty} := \frac{e^{\sigma^2}}{\Gamma \left( \frac{n+1}{2} - \sigma \right)} \chi_{\infty} (D) \tilde{D}^{-\sigma} e^{it D},
\end{align}
in the vertical strip $0 \le \re \sigma \le \frac{n+1}{2}$, and their kernels
\begin{align*}
\widetilde{\omega}_{t}^{\sigma, \infty}(r) = const.  \frac{e^{\sigma^2}}{\Gamma \left( \frac{n+1}{2} - \sigma \right)} \int_{- \infty}^{+ \infty} \chi_{\infty}(\lambda) (\lambda^2 + \tilde{\kappa}^2 )^{-\frac{\sigma}{2}} e^{it \sqrt{\lambda^2 + \kappa^2}} \varphi_{\lambda}(r) |\mathbf{c} (\lambda) |^{-2} d \lambda.
\end{align*}

The following pointwise estimates of the kernels $\omega_{t}^{\sigma,0}$ and $\widetilde{\omega}_{t}^{\sigma, \infty}$, which were obtained in \cite{anker2014wave} for real hyperbolic spaces, extend straightforwardly to all rank one Riemannian symmetric spaces of the noncompact type.

\begin{theorem}\label{pointwise kernel estimate}
For all $\sigma \in \mathbb{R}$, the kernel $\omega_{t}^{\sigma,0}$ satisfies
\begin{align*}
|\omega_{t}^{\sigma,0}(r)| \lesssim
\begin{cases}
\varphi_0 (r), \quad &\forall t\in \mathbb{R}, \ \forall r \ge 0, \\
|t|^{- \frac{3}{2}}(1+r) \varphi_0 (r), &\forall |t| \ge 1, \ \forall \ 0 \le r \le  \frac{|t|}{2}.
\end{cases}
\end{align*}
For all $\sigma \in \mathbb{C}$ with $\re \sigma = \frac{n+1}{2}$, and for every $r \ge 0$, the following estimates hold for the kernel $\widetilde{\omega}_{t}^{\sigma, \infty}$: 
\begin{align*}
|\widetilde{\omega}_{t}^{\sigma, \infty}(r)| \lesssim
\begin{cases}
|t|^{- \frac{n-1}{2}} e^{- \rho r}, \quad &\forall 0 < |t|<1,  \ if \ n \ge 3, \\
|t|^{- N} (1+r)^{N} \varphi_{0}(r), \quad &\forall |t| \ge 1, \ \forall N \in \mathbb{N}.
\end{cases}
\end{align*}
In the $2$-dimensional case, the small time estimate of  $\widetilde{\omega}_{t}^{\sigma, \infty}$ reads
\begin{align*}
| \widetilde{\omega}_{t}^{\sigma, \infty} (r) | \lesssim |t|^{-\frac{1}{2}} (1 - \log |t| ) e^{- \frac{r}{2}}, \quad \forall 0 < |t|<1.
\end{align*}
\end{theorem}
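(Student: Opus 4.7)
\bigskip

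\noindent\textbf{Proof proposal.} The plan is to treat the low-frequency piece $\omega_{t}^{\sigma,0}$ and the analytically regularized high-frequency piece $\widetilde{\omega}_{t}^{\sigma,\infty}$ separately, in each case starting from the inverse spherical Fourier representation and exploiting the two pieces of structural information that survive from the Euclidean case: the pointwise majoration $|\varphi_\lambda(r)|\le\varphi_0(r)$ for $\lambda\in\mathbb{R}$, and the rank-one Plancherel density $|\mathbf{c}(\lambda)|^{-2}$, which behaves like $\lambda^2$ near $0$ and like $|\lambda|^{n-1}$ at infinity. I would also use the Harish-Chandra expansion of $\varphi_\lambda$ for large $|\lambda|$ and the known derivative estimates $|\partial_\lambda^j\varphi_\lambda(r)|\lesssim(1+r)^j\varphi_0(r)$ on bounded $\lambda$-intervals.

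For $\omega_{t}^{\sigma,0}$, the uniform bound by $\varphi_0(r)$ is immediate: on the compact support $|\lambda|\le 2$ the multiplier $\chi_0(\lambda)(\lambda^2+\tilde\kappa^2)^{-\sigma/2}|\mathbf{c}(\lambda)|^{-2}$ is an $L^1$ function of $\lambda$, and one pulls $|\varphi_\lambda(r)|\le\varphi_0(r)$ out. For $|t|\ge 1$ and $0\le r\le|t|/2$ the decay $|t|^{-3/2}$ must come from the oscillatory factor $e^{it\sqrt{\lambda^2+\kappa^2}}$. The phase has a unique non-degenerate critical point at $\lambda=0$, and the amplitude vanishes there to order $2$ because of $|\mathbf{c}(\lambda)|^{-2}\sim\lambda^2$. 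Two integrations by parts against $\frac{1}{it}\frac{\sqrt{\lambda^2+\kappa^2}}{\lambda}\partial_\lambda$ away from $\lambda=0$, combined with a stationary-phase analysis on a small neighborhood of $\lambda=0$ with the $\lambda^2$ vanishing absorbed into the amplitude, yield the gain $|t|^{-1/2}\cdot|t|^{-1}=|t|^{-3/2}$; the $(1+r)$ factor records the cost of differentiating $\varphi_\lambda$ in $\lambda$, and $\varphi_0(r)$ comes from evaluating the remaining amplitude at the critical point.

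For $\widetilde{\omega}_{t}^{\sigma,\infty}$ with $\re\sigma=\frac{n+1}{2}$, the Gamma normalization $\Gamma(\frac{n+1}{2}-\sigma)^{-1}$ is precisely designed so that Stein's complex interpolation on the analytic family \eqref{analytic operator} produces the final bound: one proves uniform-in-$\im\sigma$ estimates on the two vertical edges $\re\sigma=0$ (trivial $L^2$ bound) and $\re\sigma=\frac{n+1}{2}$ (the $L^1\to L^\infty$-type kernel bound we are after). On $\re\sigma=\frac{n+1}{2}$ and for $0<|t|<1$, I would substitute the Harish-Chandra expansion $\varphi_\lambda(r)=\mathbf{c}(\lambda)\Phi_\lambda(r)+\mathbf{c}(-\lambda)\Phi_{-\lambda}(r)$ with $\Phi_\lambda(r)\sim e^{(i\lambda-\rho)r}$. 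This converts the kernel, up to the factor $e^{-\rho r}$, into a Euclidean-type oscillatory integral
\begin{align*}
\int \chi_\infty(\lambda)(\lambda^2+\tilde\kappa^2)^{-\sigma/2}\mathbf{c}(-\lambda)^{-1}e^{i(t\sqrt{\lambda^2+\kappa^2}\pm\lambda r)}\,d\lambda,
\end{align*}
whose amplitude grows at worst like $|\lambda|^{\frac{n-3}{2}}$. A classical stationary-phase/van der Corput argument in $\lambda$ then produces the factor $|t|^{-(n-1)/2}$ in dimension $n\ge 3$; in dimension $2$ the borderline growth of the amplitude forces a logarithmic loss, giving the stated $|t|^{-1/2}(1-\log|t|)$. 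For $|t|\ge 1$ the phase derivative $t\lambda/\sqrt{\lambda^2+\kappa^2}$ is bounded below on $|\lambda|\ge 1$, so iterated integration by parts $N$ times in $\lambda$ converts oscillation into decay $|t|^{-N}$; each derivative that falls on $\varphi_\lambda$ costs a factor $1+r$, yielding $|t|^{-N}(1+r)^N\varphi_0(r)$.

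The main obstacle I expect is the small-$|t|$ estimate of $\widetilde{\omega}_{t}^{\sigma,\infty}$: one has to check that, as $|\im\sigma|\to\infty$, the oscillatory integral estimates depend at most polynomially on $\im\sigma$ so that the Gamma factor tames them and Stein interpolation applies. Away from that, all of the individual pieces are routine oscillatory-integral manipulations, and the passage from real hyperbolic space to an arbitrary rank-one symmetric space of noncompact type only requires re-reading the $\mathbf{c}$-function and Harish-Chandra asymptotics with the appropriate multiplicities $m_\alpha,m_{2\alpha}$, as indicated in the comment preceding the theorem.
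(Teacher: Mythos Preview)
The paper does not prove this theorem: it is quoted from \cite{anker2014wave} and \cite{anker2015wave}, with the comment that the arguments ``extend straightforwardly to all rank one Riemannian symmetric spaces of the noncompact type.'' There is thus no in-paper proof to compare against. Your outline is essentially a synopsis of the method in those references, and both the low-frequency analysis (the crude bound via $|\varphi_\lambda|\le\varphi_0$, and stationary phase at $\lambda=0$ exploiting the $\lambda^2$-vanishing of $|\mathbf{c}(\lambda)|^{-2}$ to produce the $|t|^{-3/2}$ decay) and the small-time high-frequency reduction via the Harish--Chandra expansion match what is done there.

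One place where your sketch has a genuine gap is the large-time estimate for $\widetilde{\omega}_t^{\sigma,\infty}$. You propose to treat $\varphi_\lambda(r)$ as pure amplitude and integrate by parts against the phase $t\sqrt{\lambda^2+\kappa^2}$, invoking only the bound $|\partial_\lambda^j\varphi_\lambda(r)|\lesssim(1+r)^j\varphi_0(r)$. That bound carries no decay in $\lambda$, so after $N$ integrations by parts the Leibniz terms in which every $\partial_\lambda$ lands on $\varphi_\lambda$ still contain the divergent factor $\int_{|\lambda|\ge 1}|\lambda|^{(n-3)/2}\,d\lambda$ (the very amplitude order you correctly identify in the small-time discussion). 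In the cited proofs the Harish--Chandra expansion is used in this regime too: the oscillatory part of $\varphi_\lambda$ is absorbed into the phase, the remaining amplitude $\mathbf{c}(\mp\lambda)^{-1}(\lambda^2+\tilde\kappa^2)^{-\sigma/2}$ becomes a genuine symbol of order $-1$, and one separates $r\lesssim|t|$ (where the shifted phase is non-stationary) from $r\gtrsim|t|$ (where $(1+r)^N|t|^{-N}\gtrsim 1$ and the stated bound is trivial). With that correction your overall plan is the right one.
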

\section{Dispersive estimates for the wave operator \\ on locally symmetric spaces}\label{section main section} 
In this section, we prove our main result, namely Theorem \ref{dispersive estimate}. Let us first describe the wave operator $\widehat{W_{t}^{\sigma}}$ on locally symmetric space $M$. Recall that the wave kernel on $M$ is given by 
\begin{align}\label{wave kernel on M}
\widehat{\omega_{t}^{\sigma}}(x,y) = \sum_{\gamma \in \Gamma} \omega_{t}^{\sigma} (y^{-1} \gamma x), \ \forall x,y \in X.
\end{align}
\begin{proposition}\label{proposition wave kernel on M}
The series \eqref{wave kernel on M} is convergent for every $x,y \in X$, and the wave operator on $M$ is given by
\begin{align*}
\widehat{W_{t}^{\sigma}} f(x) = \int_{M} \widehat{\omega_{t}^{\sigma}}(x,y) f(y) dy,
\end{align*}
for any reasonable function $f$ on $M$.
\end{proposition}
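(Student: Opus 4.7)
The plan is to treat the two parts of the proposition separately, starting with pointwise absolute convergence of the kernel series \eqref{wave kernel on M}, and then deriving the integral representation by an unfolding argument. The crux is to combine the pointwise kernel estimates on $X$ from Theorem \ref{pointwise kernel estimate} with the convex cocompactness/subcritical exponent hypotheses of Subsection \ref{subsection assumptions}.

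First I would establish a pointwise bound of the form $|\omega_{t}^{\sigma}(r)| \le C(t)\,(1+r)^{N} e^{-\rho r}$ for every $r \ge 0$, with $C(t)$ locally bounded in $t$ and some $N \in \mathbb{N}$. For the low-frequency piece $\omega_{t}^{\sigma,0}$, this is immediate from Theorem \ref{pointwise kernel estimate} together with the standard bound $\varphi_{0}(r) \lesssim (1+r) e^{-\rho r}$. For the high-frequency piece $\widetilde{\omega}_{t}^{\sigma,\infty}$ evaluated on the line $\re \sigma = \tfrac{n+1}{2}$, the same theorem provides $e^{-\rho r}$ decay for small $|t|$ and rapid decay of the form $(1+r)^{N}\varphi_{0}(r)$ for $|t|\ge 1$; the factor $e^{\sigma^{2}}/\Gamma(\tfrac{n+1}{2}-\sigma)$ in \eqref{analytic operator} is harmless at fixed $\sigma$. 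Choose $s \in (\delta(\Gamma), \rho)$, which is possible by the assumption $\delta(\Gamma)<\rho$; then the polynomial factor is absorbed via $(1+r)^{N} e^{-\rho r} \le C_{s,N} e^{-sr}$, yielding
\begin{equation*}
\sum_{\gamma \in \Gamma} |\omega_{t}^{\sigma}(y^{-1} \gamma x)| \le C(t)\,C_{s,N} \sum_{\gamma \in \Gamma} e^{-s\, d(x,\gamma y)} = C(t)\,C_{s,N}\,P(s;x,y) < +\infty.
\end{equation*}
Pointwise convergence follows; uniform convergence on compact subsets of $X\times X$ will come from the uniform bound on $P(s;\cdot,\cdot)$ provided by Lemma \ref{estimate of Poincaré series}, which relies on convex cocompactness.

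For the integral representation, I would lift a reasonable function $f$ on $M$ to its $\Gamma$-invariant counterpart $\tilde f$ on $X$ and use the formula \eqref{wave kernel on X}, which is well defined on $X\times X$ by $K$-bi-invariance of $\omega_{t}^{\sigma}$. Then, decomposing $X = \bigsqcup_{\gamma \in \Gamma} \gamma \mathcal{F}$ for a Borel fundamental domain $\mathcal{F}$ of $\Gamma$, changing variables $y = \gamma z$ in each piece, using $\Gamma$-invariance of $\tilde f$, and reindexing $\gamma \mapsto \gamma^{-1}$, one obtains
\begin{equation*}
W_{t}^{\sigma} \tilde f(x) = \sum_{\gamma \in \Gamma}\int_{\mathcal{F}} \omega_{t}^{\sigma}(z^{-1}\gamma x)\, \tilde f(z)\, dz = \int_{M} \widehat{\omega_{t}^{\sigma}}(x,z)\, f(z)\, dz,
\end{equation*}
after a Fubini swap. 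The swap is justified by the absolute bound of the previous paragraph, provided $f$ is sufficiently decaying (e.g.\ compactly supported on $M$); the identity then extends to general reasonable $f$ by density.

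The main technical point that needs care is the absorption of the polynomial prefactor $(1+r)^{N}$ into an exponential $e^{-s r}$ strictly smaller than $e^{-\rho r}$: this step crucially exploits the strict inequality $\delta(\Gamma)<\rho$ and is where the convex cocompactness enters, through the locally uniform bound on the Poincaré series. Once this is in place, the remainder of the argument is a standard orbit-unfolding calculation.
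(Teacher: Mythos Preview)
Your argument matches the paper's: split into low/high-frequency pieces, bound each via Theorem \ref{pointwise kernel estimate} together with $\varphi_0(r)\asymp(1+r)e^{-\rho r}$, absorb the polynomial prefactor into the exponential by choosing an exponent $s\in(\delta(\Gamma),\rho)$, dominate by a convergent Poincar\'e series, and then unfold the integral over $X$ into one over $M$ using $\Gamma$-invariance of $f$. One small correction: the paper does not invoke Lemma \ref{estimate of Poincaré series} here---pointwise (and, via the triangle inequality, locally uniform) finiteness of $P(s;x,y)$ for $s>\delta(\Gamma)$ already suffices for both the convergence claim and the Fubini step, so convex cocompactness plays no role in this particular proposition and enters only later in the $L^1\to L^\infty$ bound.
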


\begin{proof}
According to the Cartan decomposition of $G$, we can write $y^{-1} \gamma x = k_{\gamma}( \exp H_{\gamma} )k_{\gamma}'$ with $H_{\gamma} \in \mathbb{R}_{+}$ and $k_{\gamma}, k_{\gamma}' \in K$. Notice that $H_{\gamma} = d(x, \gamma y)$. Then, by the $K$-bi-invariance of $\omega_{t}^{\sigma}$, we have
\begin{align*}
| \widehat{\omega_{t}^{\sigma}} (x,y) | 
= \left| \sum_{\gamma \in \Gamma} \omega_{t}^{\sigma} ( \exp H_{\gamma}) \right| \lesssim \sum_{\gamma \in \Gamma} |  \omega_{t}^{\sigma,0} (\exp H_{\gamma}) | +  \sum_{\gamma \in \Gamma} | \tilde{\omega}_{t}^{\sigma, \infty} (\exp H_{\gamma}) |.
\end{align*}

For the first part, Theorem \ref{pointwise kernel estimate} implies that for all $H_{\gamma} \ge 0$, 
\begin{align*}
\sum_{\gamma \in \Gamma} |  \omega_{t}^{\sigma,0} (\exp H_{\gamma}) | \lesssim \sum_{\gamma \in \Gamma}  \varphi_0 (\exp H_{\gamma}) 
\lesssim \sum_{\gamma \in \Gamma} (1+H_{\gamma}) e^{-\rho H_{\gamma}}.
\end{align*}

By choosing $0< \varepsilon < \rho - \delta(\Gamma)$, we obtain
\begin{align*}
\sum_{\gamma \in \Gamma} | \omega_{t}^{\sigma,0} (\exp H_{\gamma}) | \lesssim  \sum_{\gamma \in \Gamma} e^{- (\delta(\Gamma) + \varepsilon) d(x, \gamma y)} = P_{\delta(\Gamma) + \varepsilon} (x,y) < + \infty.
\end{align*}
The second part is handled similarly and thus omitted. Hence the serie \eqref{wave kernel on M} is convergent. According to \eqref{wave kernel on X}, we know that
\begin{align*}
\widehat{W_{t}^{\sigma}} f(x) =  \int_{G} \omega_{t}^{\sigma}(y^{-1}x)f(y) dy = \int_{X} \omega_{t}^{\sigma}(y^{-1}x)f(y) dy,
\end{align*}
since $\omega_{t}^{\sigma}$ is $K$-bi-invariant and $f$ is right $K$-invariant. By using Weyl's formula and the fact that $f$ is left $\Gamma$-invariant, we deduce that
\begin{align*}
\widehat{W_{t}^{\sigma}} f(x) = \int_{\Gamma \backslash X} \left( \sum_{\gamma \in \Gamma} f( \gamma y) \omega_{t}^{\sigma} (y^{-1} \gamma x) \right) dy  = \int_{M} \widehat{\omega_{t}^{\sigma}}(x,y) f(y) dy.
\end{align*}
\end{proof}

Next, we introduce the following version of the $L^2$ Kunze-Stein phenomenon on locally symmetric space $M$, which plays an essential role in the proof of the dispersive estimate.

\begin{proposition}\label{Kunze-Stein}
Let $\psi$ be a reasonable bi-$K$-invariant functions on $G$, e.g., in the Schwartz class. Then
\begin{align}
\| . * \psi \|_{L^2(M) \rightarrow L^2(M)} \le \int_{G} | \psi (x)| \varphi_{0}(x) dx.
\end{align}
\end{proposition}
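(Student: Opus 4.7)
The plan is to show that $T_\psi:=\cdot*\psi$ is a spectral multiplier of $-\Delta_M$ and then to apply the spectral theorem together with Corlette's identification $\lambda_0(M)=\rho^2$ of the bottom of the $L^2$-spectrum.

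First I would show that $T_\psi$ coincides on $L^2(M)$ with $m_\psi(-\Delta_M)$, where
$$m_\psi(\xi):=\mathcal{H}\psi\bigl(\sqrt{\xi-\rho^2}\bigr), \qquad \xi\ge\rho^2.$$
On the symmetric space $X$, the analogous identity $\cdot*\psi = m_\psi(-\Delta_X)$ is immediate from spherical Plancherel, since convolution by a bi-$K$-invariant function diagonalizes under $\mathcal{H}$ with multiplier $\mathcal{H}\psi(\lambda)$ acting on the spectral variable $\xi=\lambda^2+\rho^2$. To transfer this identity to $M$, one checks that for every $t>0$ the heat semigroup $e^{t\Delta_M}$ on $M$ coincides with convolution by the bi-$K$-invariant heat kernel $h_t$ of $X$ in the sense of Proposition \ref{proposition wave kernel on M}: both sides solve the heat equation on $M$ with the same initial data, and the convergence of the Poincaré-type sum defining the descent is guaranteed by $\delta(\Gamma)<\rho$. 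A Stone--Weierstrass argument on the algebra generated by the exponentials $\xi\mapsto e^{-t\xi}$ on $[\rho^2,\infty)$ then extends the identification $T_\psi=m_\psi(-\Delta_M)$ to every Schwartz bi-$K$-invariant $\psi$.

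Next, the spectral theorem combined with Corlette's theorem \cite{corlette1990hausdorff}, which asserts $\sigma(-\Delta_M)\subseteq[\rho^2,\infty)$, yields
$$\|T_\psi\|_{L^2(M)\to L^2(M)} = \sup_{\xi\in\sigma(-\Delta_M)} |m_\psi(\xi)| \le \sup_{\lambda\in\mathbb{R}} |\mathcal{H}\psi(\lambda)|.$$
Finally, the integral representation \eqref{spherical function on noncompact spaces} shows $|\varphi_\lambda(x)|\le\varphi_0(x)$ for every real $\lambda$, so that
$$|\mathcal{H}\psi(\lambda)| = \left|\int_G \psi(x)\varphi_\lambda(x)\,dx\right| \le \int_G |\psi(x)|\,\varphi_0(x)\,dx,$$
which is the claimed bound.

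The main obstacle is the first step: rigorously identifying $T_\psi$ as the spectral multiplier $m_\psi(-\Delta_M)$ on $L^2(M)$ requires checking that convolution by descended bi-$K$-invariant kernels and the functional calculus of $-\Delta_M$ intertwine correctly under the covering $X\to M$. Once this compatibility is secured, the remaining steps are routine applications of the spectral theorem and of the elementary majorization $|\varphi_\lambda|\le\varphi_0$ on the real line.
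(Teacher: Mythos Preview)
Your argument is correct in outline and reaches the same final inequality
\[
\sup_{\lambda\in\mathbb{R}}|\mathcal{H}\psi(\lambda)|\le\int_G|\psi(x)|\,\varphi_0(x)\,dx
\]
as the paper, but the route is genuinely different. The paper does not identify $T_\psi$ as a function of $-\Delta_M$; instead it uses the direct integral decomposition
\[
L^2(M)\cong\int_{\widehat{G}_K}^{\oplus}(\mathcal{H}_\pi)^K\,d\nu(\pi)
\]
over the spherical unitary dual, together with the explicit classification of $\widehat{G}_K$ in rank one (principal series, complementary series, trivial representation). Right convolution by $\psi$ acts on each one-dimensional fiber $(\mathcal{H}_{\pi_\lambda})^K$ by the scalar $\mathcal{H}\psi(\lambda)$, and Corlette's result $\lambda_0=\rho^2$ is used to conclude that only tempered representations ($\lambda$ real) occur in the support of $\nu$. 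The operator norm is then read off fiberwise.

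Your approach trades this representation-theoretic input (type~I structure, direct integral, classification of $\widehat{G}_K$) for the functional calculus of the single self-adjoint operator $-\Delta_M$, which is conceptually lighter. The price is the identification step $T_\psi=m_\psi(-\Delta_M)$ on $L^2(M)$, which you rightly flag as the main issue: a $\Gamma$-periodic lift of $f\in L^2(M)$ is not in $L^2(X)$, so one cannot simply invoke the Plancherel identity on $X$, and the heat-kernel/Stone--Weierstrass route you sketch does work but needs care (convergence of the approximating convolution operators on $L^2(M)$, not just of the multipliers). The paper's direct-integral framework sidesteps this entirely, since the diagonalization of convolution is built into the decomposition. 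Either way the final majorization $|\varphi_\lambda|\le\varphi_0$ for real $\lambda$ is identical.
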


The Kunze-Stein phenomenon is a remarkable convolution property on semisimple Lie groups and symmetric spaces (see e.g.,  \cite{kunze1960uniformly}, \cite{herz1970kunze}, \cite{cowling1978kunze} and \cite{ionescu2000endpoint}), which was extended to some classes of locally symmetric spaces in \cite{lohoue2009invariants} and \cite{lohoue2014multipliers}. Let us prove Proposition \ref{Kunze-Stein} along the lines of \cite{lohoue2014multipliers} in our setting where $\rank X = 1$, $\delta(\Gamma) < \rho$ and with no additional assumption.

\begin{proof}
Since $G$ is a connected semisimple Lie group, it is of type I, \cite{harish1954representations, dixmier1977c*}. Denote by $\widehat{G}$ the unitary dual of $G$ and by $\widehat{G}_K$ the spherical subdual. We write $L^2 (\Gamma \backslash G )$ as the direct integral
\begin{align*}
L^2 (\Gamma \backslash G ) \cong \int_{\widehat{G}}^{\oplus} \mathcal{H}_{\pi} d \nu (\pi)
\end{align*}
and
\begin{align}\label{direct integral}
L^2 (M) \cong \int_{\widehat{G}_K}^{\oplus} ( \mathcal{H}_{\pi} )^{K} d \nu (\pi)
\end{align}
accordingly, where $\nu$ is a positive measure on $\widehat{G}$, see for instance \cite{bunke2000spectrum}. Recall that $( \mathcal{H}_{\pi} )^{K} = \mathbb{C} e_{\pi}$ is one-dimensional for every $\pi \in \widehat{G}_K$. Recall moreover that, in rank one, $\widehat{G}_K$ is parametrized by a subset of $\mathbb{C} / \pm 1$. Specifically, $\widehat{G}_K$ consists of
\begin{itemize}[leftmargin=*]
\item the unitary spherical principal series $\pi_{\pm \lambda}$ ($\lambda \in \mathbb{R} / \pm 1$),
\item the trivial representation $\pi_{\pm i \rho} = 1$,
\item the complementary series $\pi_{\pm i \lambda}$ ($\lambda \in I$), where
\begin{equation*}
I =
\begin{cases}
\left(0, \rho \right) \quad if \ X=H^n (\mathbb{R}) \ or \ H^n (\mathbb{C}), \\
\left(0, \frac{m_{\alpha}}{2}+1 \right] \quad if \ X=H^n (\mathbb{H}) \ or \ H^2 (\mathbb{O}).
\end{cases}
\end{equation*}
\end{itemize}
This result goes back to \cite{kostant1969existence}. Under the assumption $\delta (\Gamma) \le \rho$, we know that $\lambda_0 = \rho^2$ is the bottom of the spectrum of $- \Delta$ on $L^2(M)$. As $- \Delta$ acts on $(\mathcal{H}_{{\pi}} )^{K} $ by multiplication by $\lambda^2 + \rho^2$, we deduce that \eqref{direct integral} involves only tempered representations, i.e., representations $\pi_{\lambda}$ with $\lambda \in \mathbb{R} / \pm 1$. Moreover, as the right convolution by $\psi \in \mathcal{S}(K \backslash G / K)$ acts on $(\mathcal{H}_{{\pi}_{\lambda}} )^{K} $ by multiplication by
\begin{align*}
\mathcal{H} f (\lambda) = \int_{G} f(x) \varphi_{\lambda} (x) dx,
\end{align*}
where $\varphi_{\lambda} (x) = \left\langle {\pi_{\lambda}(x) e_{{\pi}_{\lambda}},e_{{\pi}_{\lambda}} } \right\rangle$  is the spherical function \eqref{eigenfunction}, we deduce from \eqref{spherical function on noncompact spaces} that
\begin{align*}
\| . * \psi \|_{L^2(M) \rightarrow L^2(M)} 
\le \sup_{\lambda \in \mathbb{R}} \left| \int_{G} \psi(x) \varphi_{\lambda}(x) dx \right| 
\le \int_{G} | \psi (x)| \varphi_{0}(x) dx.
\end{align*}
\end{proof}

The following two lemmas are used in the proof of dispersive estimates.

\begin{lemma}\label{estimate of Poincaré series}
If $\Gamma$ is convex cocompact, then there exists a constant $C>0$ such that for all $x,y \in X$,
\begin{align*}
P(s;x,y) \le C P(s;\mathbf{0},\mathbf{0}),
\end{align*}
where $\mathbf{0} = eK$ denotes the origin of $X$.
\end{lemma}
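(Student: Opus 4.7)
The plan is to exploit the convex cocompactness assumption via the nearest-point projection onto $\conv(\Lambda_\Gamma)$. Since $X$ is a Riemannian symmetric space of the noncompact type, it is a CAT$(0)$ (in fact CAT$(-1)$ in rank one) space, so the nearest-point projection $\pi : X \to \conv(\Lambda_\Gamma)$ onto the closed convex subset $\conv(\Lambda_\Gamma)$ is well-defined and $1$-Lipschitz. Because $\conv(\Lambda_\Gamma)$ is $\Gamma$-invariant, the uniqueness of the nearest point forces $\pi$ to be $\Gamma$-equivariant: $\pi(\gamma z) = \gamma\,\pi(z)$ for every $\gamma \in \Gamma$ and $z \in X$.

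Next I would use $\pi$ to push $x$ and $y$ into $\conv(\Lambda_\Gamma)$. The $1$-Lipschitz property together with equivariance gives
\begin{align*}
d(\pi(x), \gamma\, \pi(y)) \;=\; d(\pi(x), \pi(\gamma y)) \;\le\; d(x, \gamma y),
\end{align*}
so summing $e^{-s(\cdot)}$ over $\gamma \in \Gamma$ yields $P(s;x,y) \le P(s;\pi(x),\pi(y))$.

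The second reduction uses the compactness of $\Gamma \backslash \conv(\Lambda_\Gamma)$: choose a compact set $F \subset \conv(\Lambda_\Gamma)$ with $\Gamma F = \conv(\Lambda_\Gamma)$, and pick $\gamma_1, \gamma_2 \in \Gamma$ so that $x' := \gamma_1^{-1} \pi(x) \in F$ and $y' := \gamma_2^{-1} \pi(y) \in F$. Since $\Gamma$ acts by isometries, the change of variable $\gamma \mapsto \gamma_1 \gamma \gamma_2^{-1}$ in the defining sum shows that $P(s;\cdot,\cdot)$ is separately $\Gamma$-invariant in each argument, hence $P(s;\pi(x),\pi(y)) = P(s;x',y')$. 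Finally, for $x',y' \in F$ the triangle inequality gives
\begin{align*}
d(\mathbf{0},\gamma\mathbf{0}) \;\le\; d(\mathbf{0},x') + d(x',\gamma y') + d(\gamma y', \gamma\mathbf{0}) \;\le\; d(x',\gamma y') + 2R,
\end{align*}
where $R = \sup_{z \in F} d(\mathbf{0},z) < +\infty$; summing the resulting inequality $e^{-s d(x',\gamma y')} \le e^{2sR} e^{-sd(\mathbf{0},\gamma\mathbf{0})}$ over $\Gamma$ produces $P(s;x',y') \le e^{2sR} P(s;\mathbf{0},\mathbf{0})$, and the constant $C = e^{2sR}$ works.

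The only non-routine point is ensuring that the nearest-point projection onto $\conv(\Lambda_\Gamma)$ has the stated properties (well-defined, $1$-Lipschitz, $\Gamma$-equivariant); this is standard CAT$(0)$ geometry once one knows that $\conv(\Lambda_\Gamma)$ is a closed convex subset of $X$. Everything else is the triangle inequality and a geometric pigeonhole using convex cocompactness.
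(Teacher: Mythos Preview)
Your argument is correct and essentially identical to the paper's: both use the nearest-point (orthogonal) projection onto $\conv(\Lambda_\Gamma)$, its $1$-Lipschitz and $\Gamma$-equivariance properties, then reduce to a compact fundamental domain and finish with the triangle inequality, obtaining the constant $e^{2sR}$ (the paper uses $\diam(F)$ after arranging $\mathbf{0}\in F$, which amounts to the same thing).
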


\begin{proof}
Let $\conv (\Lambda_{\Gamma})$ be the convex hull of the limit set $\Lambda_{\Gamma}$ of $\Gamma$. Recall that $\Gamma$ is said to be convex cocompact if $\Gamma \backslash \conv (\Lambda_{\Gamma})$ is compact. Let $F$ be a compact fundamental domain containing $\mathbf{0}$ for the action of $\Gamma$ on $\conv (\Lambda_{\Gamma})$. Then, for each $z \in \conv (\Lambda_{\Gamma})$, there exists $\gamma \in \Gamma$ and $z' \in F$ such that $z= \gamma z'$.

The orthogonal projection $\pi_{\bot} : X \rightarrow \conv (\Lambda_{\Gamma})$ is defined as follows. For every $x$ in $X$, $\pi_{\bot}(x)$ is the unique point in $\conv (\Lambda_{\Gamma})$ such that
\begin{align*}
d(x, \pi_{\bot}(x)) = d(x, \conv (\Lambda_{\Gamma})) :=\inf_{y \in \conv (\Lambda_{\Gamma})} d(x,y).
\end{align*}
Then, for all $x,y \in X$, we have (see \cite{bridson1999metric}, Chap II, Proposition 2.4.)
\begin{align*}
d(\pi_{\bot}(x), \pi_{\bot}(y)) \le d(x,y).
\end{align*}
On the other hand, for all $x \in X$ and $\gamma \in \Gamma$,
\begin{align*}
d(\gamma x, \pi_{\bot}(\gamma x))
= d(\gamma x, \conv (\Lambda_{\Gamma}))
= d(x, \conv (\Lambda_{\Gamma})),
\end{align*}
since $\Lambda_{\Gamma}$ and $\conv (\Lambda_{\Gamma})$ are $\Gamma$-invariant. Thus
\begin{align*}
d(\gamma x, \pi_{\bot}(\gamma x))
= d(x,\pi_{\bot}(x)) 
= d(\gamma x, \gamma \pi_{\bot}(x)),
\end{align*}
which implies that, for all $x \in X$ and $\gamma \in \Gamma$, $\pi_{\bot}(\gamma x) = \gamma \pi_{\bot}(x)$. Therefore, for every $x,y \in X$ and $s>0$, the Poincaré series satisfies:
\begin{align}\label{poincare series 1}
\begin{split}
P(s;x,y) = \sum_{\gamma \in \Gamma} e^{-s d(x, \gamma y)}
\le& \sum_{\gamma \in \Gamma} e^{-s d(\pi_{\bot}(x), \pi_{\bot}(\gamma y))} \\
=& \sum_{\gamma \in \Gamma} e^{-s d(\pi_{\bot}(x), \gamma \pi_{\bot}(y))} 
=P(s;\pi_{\bot}(x), \pi_{\bot}(y)),
\end{split}
\end{align}
with $\pi_{\bot}(x), \pi_{\bot}(y) \in \conv (\Lambda_{\Gamma})$. Moreover, as there exist $\gamma_1,\gamma_2 \in \Gamma$ and $x',y' \in F$ such that $\pi_{\bot}(x) = \gamma_1 x'$, $\pi_{\bot}(y) = \gamma_2 y'$, we have
\begin{align}\label{poincare series 2}
P(s;\pi_{\bot}(x), \pi_{\bot}(y)) = \sum_{\gamma \in \Gamma} e^{-s d(x', \gamma_{1}^{-1} \gamma \gamma_{2} y')}
= \sum_{\gamma' \in \Gamma}e^{-s d(x', \gamma' y')}
= P(s;x',y').
\end{align}
Since $x',y' \in F$, the triangular inequality yields
\begin{align*}
d (\mathbf{0}, \gamma' \mathbf{0}) \le d (\mathbf{0}, x') + d(x', \gamma' y') + d (\gamma' y', \gamma' \mathbf{0}) \le d(x', \gamma' y') + 2 \diam (F).
\end{align*}
Hence
\begin{align}\label{poincare series 3}
P(s;x',y') \le e^{2s \diam (F)} P(s; \mathbf{0} , \mathbf{0}).
\end{align}
We conclude by combining \eqref{poincare series 1}, \eqref{poincare series 2} and \eqref{poincare series 3}.
\end{proof}

Consider the radial weight function defined by
\begin{align*}
\mu(x) = e^{(\delta(\Gamma) + \varepsilon) d(x, \mathbf{0})},
\end{align*}
with $0 < \varepsilon < \rho - \delta(\Gamma)$. We prove the following lemma by applying previous results.

\begin{lemma}\label{B(f,g)}
Let $f$ be a reasonable function on $M$, and $g$ be a radial reasonable  function on $X$. Then the bilinear operator $B(f,g) := f* (\mu^{-1}g)$ satisfies the following estimate:
\begin{align*}
\| B( \cdot,g) \|_{L^{q'}(M) \rightarrow L^q(M)} \le C_q  \left( \int_{G} \varphi_0(x) \mu^{-1}(x) |g(x)|^{q/2} dx \right)^{2/q},
\end{align*}
for all $2 \le q \le \infty$.
\end{lemma}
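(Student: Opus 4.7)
The plan is to establish the estimate at the endpoints $q=2$ and $q=\infty$ and then deduce the intermediate cases by Stein's complex interpolation with an analytic family of operators, chosen so that the exponent $|g|^{q/2}$ appearing under the integral emerges naturally.

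At $q=2$, the operator $B(\cdot,g)$ is right convolution by the radial, hence bi-$K$-invariant, function $\mu^{-1}g$. The $L^2$ Kunze--Stein phenomenon (Proposition \ref{Kunze-Stein}) gives immediately
\begin{equation*}
\bigl\| B(\cdot,g) \bigr\|_{L^2(M)\to L^2(M)} \leq \int_G \varphi_0(x)\,\mu^{-1}(x)\,|g(x)|\,dx,
\end{equation*}
which is precisely the claim at $q=2$. At $q=\infty$, Young's inequality for convolution yields $\|B(\cdot,g)\|_{L^1\to L^\infty}\leq \|\mu^{-1}g\|_\infty \leq \|g\|_\infty$, since $\mu^{-1}\leq 1$.

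To extract the exponent $q/2$ under the integral, I will interpolate along an analytic family rather than for fixed $g$. For $z$ in the strip $0\leq \re z \leq 1$, set
\begin{equation*}
g_z(x) := g(x)\,|g(x)|^{(q/2)(1-z)-1}, \qquad T_z(f) := f \ast \bigl( \mu^{-1} g_z \bigr),
\end{equation*}
with the convention $g_z(x)=0$ when $g(x)=0$. Each $g_z$ remains radial, and $|g_z| = |g|^{(q/2)\re(1-z)}$ depends only on $\re z$. On the line $\re z = 0$, $|g_z| = |g|^{q/2}$, so Proposition \ref{Kunze-Stein} gives $\|T_z\|_{L^2 \to L^2} \leq A$, where $A := \int_G \varphi_0 \,\mu^{-1} |g|^{q/2}\, dx$. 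On the line $\re z = 1$, $|g_z|\leq 1$, so $\|T_z\|_{L^1 \to L^\infty}\leq 1$. At the intermediate parameter $\theta := 1 - 2/q$, $g_\theta = g$ and $T_\theta = B(\cdot,g)$, while Stein's interpolation exponents give $1/p_\theta = (1+\theta)/2 = 1/q'$ and $1/q_\theta = (1-\theta)/2 = 1/q$. Therefore
\begin{equation*}
\bigl\| B(\cdot,g) \bigr\|_{L^{q'}(M)\to L^q(M)} \leq A^{1-\theta}\cdot 1^\theta = A^{2/q},
\end{equation*}
which is exactly the claimed bound.

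The main technical point will be verifying that $\{T_z\}$ is an admissible analytic family in the sense of Stein's theorem: holomorphy in $z$, bi-$K$-invariance of $\mu^{-1}g_z$ for every $z$, and suitable growth control of the operator norms on the lines $\re z \in \{0,1\}$. Since $|g_z|$ depends only on $\re z$ and the oscillatory factor $|g|^{-i(q/2)\im z}$ has modulus $1$, the endpoint bounds above are uniform in $\im z$ and Stein's theorem applies without further modification.
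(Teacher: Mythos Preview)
Your $L^2\to L^2$ endpoint via Proposition~\ref{Kunze-Stein} is exactly right, and your analytic-family interpolation is a correct and standard way to obtain the exponent $|g|^{q/2}$; the paper does the same thing under the name ``standard interpolation'' (bilinear/Stein--Weiss with change of measure).

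The genuine gap is at the $L^1(M)\to L^\infty(M)$ endpoint. You write that Young's inequality gives $\|B(\cdot,g)\|_{L^1\to L^\infty}\le \|\mu^{-1}g\|_\infty$, but Young's inequality on $G$ does not apply: $f$ lives on $M=\Gamma\backslash X$, and its $\Gamma$-periodic lift to $X$ is \emph{not} in $L^1(X)$ unless $f\equiv 0$. What one must do is unfold the convolution,
\[
B(f,g)(x)=\int_{\Gamma\backslash X}\Bigl(\sum_{\gamma\in\Gamma}(\mu^{-1}g)(y^{-1}\gamma x)\Bigr)f(y)\,dy,
\]
so that the $L^1(M)\to L^\infty(M)$ norm equals the supremum over $x,y$ of the averaged kernel $\sum_{\gamma}(\mu^{-1}g)(y^{-1}\gamma x)$. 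Bounding each term by $\|g\|_\infty\,e^{-(\delta(\Gamma)+\varepsilon)d(x,\gamma y)}$ turns this sum into the Poincar\'e series $P_{\delta(\Gamma)+\varepsilon}(x,y)$, and one then needs Lemma~\ref{estimate of Poincaré series} (the convex-cocompactness input) to get a bound \emph{uniform} in $x,y$. This is the whole reason the weight $\mu^{-1}$ is built into $B$: not merely because $\mu^{-1}\le 1$, but because it produces a convergent, uniformly bounded Poincar\'e series after summation over $\Gamma$. Without invoking Lemma~\ref{estimate of Poincaré series} your endpoint bound, and hence the interpolation, is unjustified; once you insert it, the rest of your argument goes through.
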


\begin{proof}
According to Proposition \ref{Kunze-Stein},
\begin{align}\label{2-2}
\| B(\cdot,g) \|_{L^{2}(M) \rightarrow L^2(M)} \le \int_{G} \varphi_0(x) \mu^{-1}(x) |g(x)| dx.
\end{align}

Since $f$ is left $\Gamma$-invariant, we can rewrite
\begin{align*}
B(f,g)(x) =& \int_{X} \left( \mu^{-1} g \right) \left(y^{-1}x \right) f(y) dy \\
=& \int_{\Gamma \backslash X} \left( \sum_{\gamma \in \Gamma} \left( \mu^{-1} g \right) \left(y^{-1} \gamma x \right) \right) f(y) dy,
\end{align*}
with
\begin{align*}
\left| \sum_{\gamma \in \Gamma} \left( \mu^{-1} g \right) \left(y^{-1} \gamma x \right) \right| 
\le& ||g||_{\infty} \sum_{\gamma \in \Gamma}e^{-(\delta(\Gamma) + \varepsilon) d(x, \gamma y)} \\
\le& ||g||_{\infty} P_{\delta(\Gamma)+ \varepsilon} (x,y) 
\le ||g||_{\infty} P_{\delta(\Gamma)+ \varepsilon} (0,0),
\end{align*}
according to Lemma \ref{estimate of Poincaré series}. Hence
\begin{align}\label{1-infty}
\| B(\cdot,g) \|_{L^{1}(M) \rightarrow L^{\infty}(M)} 
= \sup_{x,y \in G} \left| \sum_{\gamma \in \Gamma} \left( \mu^{-1} g \right) \left(y^{-1} \gamma x \right) \right| \le C ||g||_{\infty}.
\end{align}
We conclude by standard interpolations between \eqref{2-2} and \eqref{1-infty}.
\end{proof}

We prove now our main result.

\begin{proof}[Proof of Theorem \ref{dispersive estimate}]
We split up the proof into two parts, depending whether the time $t$ is small or large.
\par
\noindent {\bf Dispersive estimate for small time} \par
Assume that $0 < |t| <1$. On the one hand, by using the Lemma \ref {B(f,g)} with $g(x) = \mu(x) \omega_{t}^{\sigma,0}(x)$, we have
\begin{align*}
\| \cdot * \omega_{t}^{\sigma,0} \|_{L^{q'}(M) \rightarrow L^{q}(M)} \le C_q \left( \int_{G} \varphi_0(x) \mu(x)^{\frac{q}{2}-1} |\omega_{t}^{\sigma,0}(x)|^{\frac{q}{2}} dx \right)^{\frac{2}{q}}.
\end{align*}
Notice that the ground spherical function $\varphi_0$, the weight $\mu$ and the kernel $ \omega_{t}^{\sigma,0}$ are all $K$-bi-invariant. By using the expression \eqref{Cartan decomposition for Haar measure} of the Haar measure in the Cartan decomposition, together with the estimate \eqref{estimate of Haar measure}, we obtain fisrt
\begin{align*}
\int_{G} \varphi_0(x) \mu(x)^{\frac{q}{2}-1} |\omega_{t}^{\sigma,0}(x)|^{\frac{q}{2}} dx 
\lesssim& \int_{0}^{+ \infty} \varphi_0 (r) \mu(r)^{\frac{q}{2}-1} | \omega_{t}^{\sigma,0}(r)|^{\frac{q}{2}} e^{2 \rho r} dr. 
\end{align*}
As $| \omega_{t}^{\sigma,0}(r)| \lesssim \varphi_0(r)$, according to  Theorem \ref{pointwise kernel estimate}, and
\footnote{The symbol $\asymp$ means that there exist two constants $0 < C_1 \le C_2 < + \infty$ such that
\begin{align*}
C_1 \le \frac{\varphi_0 (r)}{(1+r)e^{-\rho r}} \le C_2, \ \forall r \ge 0.
\end{align*}
}
 $\varphi_0 (r) \asymp (1+r)e^{-\rho r}$, we obtain next
\begin{align*}
\int_{0}^{+ \infty} \varphi_0 (r) \mu(r)^{\frac{q}{2}-1} | \omega_{t}^{\sigma,0}(r)|^{\frac{q}{2}} e^{2 \rho r} dr \lesssim \int_{0}^{+ \infty}(1+r)^{\frac{q}{2}+1} e^{- \left(\frac{q}{2}-1 \right) \left( \rho - \delta (\Gamma) - \varepsilon \right)r} dr.
\end{align*}
Since $ \rho - \delta (\Gamma) - \varepsilon >0$, the last integral is finite for any $2 < q < + \infty$. By using Lemma \ref{B(f,g)}, we conclude that
\begin{align*}
\sup_{0 < |t| < 1} \| \cdot * \omega_{t}^{\sigma,0} \|_{L^{q'}(M) \rightarrow L^{q}(M)} < + \infty.
\end{align*}
On the other hand, consider the analytic family of operators $\widetilde{W}_{t}^{\sigma, \infty}$ defined by \eqref{analytic operator}. When $\re \sigma = 0$, the spectral theorem yields
\begin{align}\label{spectral theory}
\left\| \widetilde{W}_{t}^{\sigma, \infty} \right\|_{L^2 (M) \rightarrow L^2 (M)} \lesssim \left\| e^{itD} \right\|_{L^2 (M) \rightarrow L^2 (M)} =1,
\end{align}
for all $t \in \mathbb{R}^{*}$. When $\re \sigma = \frac{n+1}{2}$, Theorem \ref{pointwise kernel estimate} yields
\begin{align*}
\left\| \widetilde{W}_{t}^{\sigma, \infty} \right\|_{L^1 (M) \rightarrow L^{\infty} (M)} = \sup_{x,y \in M} \left\| \widehat{\tilde{\omega}_{t}^{\sigma}} (x,y)\right\| \lesssim \left\| \mu \tilde{\omega}_{t}^{\sigma} (x,y) \right\|_{\infty} \lesssim |t|^{-\frac{n-1}{2}}
\end{align*}
in dimension $n \ge 3$. By applying Stein's interpolation theorem for an analytic family of operators, we obtain
\begin{align*}
\left\| \widetilde{W}_{t}^{\frac{n+1}{2} (1-\theta), \infty} \right\|_{L^{q'} (M) \rightarrow L^{q} (M)} \lesssim |t|^{-\frac{n-1}{2}(1- \theta)},
\end{align*}
where $\theta = \frac{2}{q}$, that is
\begin{align*}
\left\| \cdot * \widetilde{\omega}_{t}^{\sigma, \infty} \right\|_{L^{q'} (M) \rightarrow L^{q} (M)} \lesssim |t|^{-(n-1)(\frac{1}{2} - \frac{1}{q})},
\end{align*}
with $\sigma = (n+1)(\frac{1}{2} - \frac{1}{q})$. In conclusion,
\begin{align*}
\left\| \widehat{W_{t}^{\sigma}} \right\|_{L^{q'}(M) \rightarrow L^q(M)}
\lesssim  |t|^{-(n-1)\left(\frac{1}{2} - \frac{1}{q}\right)}, \ \forall 0<|t|<1
\end{align*}
for $n \ge 3$, $\sigma = (n+1)(\frac{1}{2} - \frac{1}{q})$ and $2<q< \infty$. In dimension $n=2$, the same arguments yield
\begin{align*}
\left\| \widehat{W_{t}^{\sigma}} \right\|_{L^{q'}(M) \rightarrow L^q(M)}
\lesssim  |t|^{-\left(\frac{1}{2} - \frac{1}{q}\right)} (1- \log |t| )^{1-\frac{2}{q}}, \ \forall 0<|t|<1
\end{align*}
for $\sigma = 3 \left(\frac{1}{2} - \frac{1}{q}\right)$ and $2 < q < + \infty$.
\par
\noindent {\bf Dispersive estimate for large time} \par
Assume now that $|t| \ge 1$. We proceed as before after splitting up  the kernel as follows:
\begin{align*}
{\omega}_{t}^{\sigma}  = \mathds{1}_{B \left( 0, \frac{|t|}{2}\right)} \omega_{t}^{\sigma,0} + \mathds{1}_{X \backslash B \left( 0, \frac{|t|}{2}\right)} \omega_{t}^{\sigma,0} + {\omega}_{t}^{\sigma, \infty}.
\end{align*}
By using Lemma \ref{B(f,g)} and Theorem \ref{pointwise kernel estimate}, we obtain
\begin{align*}
\left\| \cdot * \mathds{1}_{B \left( 0, \frac{|t|}{2}\right)} \omega_{t}^{\sigma,0} \right\|_{L^{q'}(M) \rightarrow L^q(M)} \lesssim& \left\lbrace \int_{0}^{\frac{|t|}{2}} \varphi_0 (r) \mu(r)^{\frac{q}{2}-1} |\omega_{t}^{\sigma,0}(r)|^{\frac{q}{2}}e^{2 \rho r} dr \right\rbrace^{\frac{2}{q}} \\
\lesssim& |t|^{-\frac{3}{2}} \underbrace{ \left\lbrace \int_{0}^{\frac{|t|}{2}} (1+r)^{1+q} e^{- \left( \frac{q}{2} - 1 \right) \left( \rho - \delta (\Gamma) - \varepsilon \right)r} dr \right\rbrace^{\frac{2}{q}}}_{< + \infty}
\end{align*}
and 
\begin{align*}
\left\| f *\mathds{1}_{X \backslash B \left( 0, \frac{|t|}{2}\right)} \omega_{t}^{\sigma,0} \right\|_{L^{q'}(M) \rightarrow L^q(M)} \lesssim& \left\lbrace \int_{\frac{|t|}{2}}^{+ \infty} \varphi_0 (r) \mu(r)^{\frac{q}{2}-1} |\omega_{t}^{\sigma,0}(r)|^{\frac{q}{2}}e^{2 \rho r} dr \right\rbrace^{\frac{2}{q}} \\
\lesssim& \left\lbrace \int_{\frac{|t|}{2}}^{+ \infty} (1+r)^{\frac{q}{2}+1} e^{- \left( \frac{q}{2} - 1 \right) \left( \rho - \delta (\Gamma) - \varepsilon \right) r} dr \right\rbrace^{\frac{2}{q}},
\end{align*}
which is $O(|t|^{-N})$, for any $N > 0$. 
Instead of ${\omega}_{t}^{\sigma, \infty}$, we consider again the kernel $\widetilde{\omega}_{t}^{\sigma, \infty}$. By Theorem \ref{pointwise kernel estimate}, the associated operators satisfy
\begin{align*}
\left\| \widetilde{W}_{t}^{\sigma, \infty} \right\|_{L^1 (M) \rightarrow L^{\infty} (M)} \lesssim |t|^{-N}, \ \forall N \in \mathbb{N}
\end{align*}
when $\re \sigma = \frac{n+1}{2}$. By using again Stein's interpolation theorem and by summing up these estimates, we obtain finally
\begin{align*}
\left\| \widehat{W_{t}^{\sigma}} \right\|_{L^{q'}(M) \rightarrow L^q(M)}
\lesssim  |t|^{-\frac{3}{2}}, \ \forall |t| \ge 1
\end{align*}
for $n \ge 2$, $\sigma = (n+1)(\frac{1}{2} - \frac{1}{q})$ and $2<q< \infty$.
\end{proof}
\section{Strichartz estimate and applications}\label{section applcations}
Let $\sigma \in \mathbb{R}$ and $1<q< \infty$. Recall that the Sobolev  space $H^{\sigma,q}(M)$ is the image of $L^q(M)$ under the operator $(- \Delta)^{- \frac{\sigma}{2}}$, equipped with the norm
\begin{align*}
\| f \|_{H^{\sigma,q}(M)} = \| (- \Delta)^{ \frac{\sigma}{2}} f \|_{L^q(M)}.
\end{align*}
If  $\sigma=N$ is a nonnegative integer, then $H^{\sigma,q}(M)$ coincides with the classical Sobolev space
\begin{align*}
W^{N,q} (M) = \left\lbrace f \in L^q(M) \ | \ \nabla^{j} f \in L^q(M), \ \forall 1 \le j \le N \right\rbrace,
\end{align*}
defined by means of covariant derivatives. The following Sobolev embedding theorem is used in next subsection:
\begin{theorem}
Let $1< q_1,q_2 < \infty$ and $\sigma_1, \sigma_2 \in \mathbb{R}$ such that $\sigma_1 - \sigma_2 \ge \frac{n}{q_1} - \frac{n}{q_2} \ge 0$. Then
\begin{align}\label{embedding}
H^{\sigma_1,q_1}(M) \subset H^{\sigma_2,q_2}(M).
\end{align}
\end{theorem}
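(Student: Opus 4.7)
The plan is to reduce the embedding to showing that the Bessel potential $T_\alpha := (-\Delta)^{-\alpha/2}$, with $\alpha := \sigma_1 - \sigma_2 \ge n(1/q_1 - 1/q_2) \ge 0$, is bounded from $L^{q_1}(M)$ to $L^{q_2}(M)$. Indeed, writing an arbitrary $f \in H^{\sigma_1,q_1}(M)$ in the form $(-\Delta)^{-\sigma_1/2} g$ with $g = (-\Delta)^{\sigma_1/2} f \in L^{q_1}(M)$ gives $\|f\|_{H^{\sigma_2,q_2}(M)} = \|T_\alpha g\|_{L^{q_2}(M)}$. The strict-inequality cases $\alpha > n(1/q_1 - 1/q_2)$ reduce to the critical equality case by factoring out an extra positive power of $(-\Delta)^{-1}$, which is $L^p(M)$-bounded for every $1 < p < \infty$ by the spectral gap $-\Delta \ge \rho^2$ and standard spectral multiplier theory.

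On $X$, the operator $T_\alpha$ is right convolution by a $K$-bi-invariant radial kernel $k_\alpha$; via the subordination formula $T_\alpha = \Gamma(\alpha/2)^{-1}\int_0^{+\infty} t^{\alpha/2 - 1} e^{t\Delta}\, dt$ combined with the classical heat kernel bounds on rank one noncompact symmetric spaces, this kernel has a Euclidean-type local singularity $|k_\alpha(\exp rH)| \lesssim r^{\alpha - n}$ for $0 < r \le 1$ (with a log correction at $\alpha = n$) and an exponentially decaying tail $|k_\alpha(\exp rH)| \lesssim (1+r)^{N} e^{-\rho r}$ for $r \ge 1$, for arbitrarily large $N$. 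I would then split $k_\alpha = k_\alpha^{\mathrm{loc}} + k_\alpha^{\infty}$ by a smooth radial cut-off at $r = 1$. The global piece $k_\alpha^{\infty}$ is handled in complete analogy with Lemma \ref{B(f,g)}: the weight $\mu(x) = e^{(\delta(\Gamma)+\varepsilon) d(x,\mathbf 0)}$ (for some $\varepsilon < \rho - \delta(\Gamma)$) absorbs the Poincaré series via Lemma \ref{estimate of Poincaré series}, while Proposition \ref{Kunze-Stein} combined with $\varphi_0(r) \asymp (1+r) e^{-\rho r}$ provides the $L^2(M) \to L^2(M)$ bound; standard interpolation then covers every admissible pair $q_1 \le 2 \le q_2$. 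The local piece $k_\alpha^{\mathrm{loc}}$ is a compactly supported Euclidean-type Riesz kernel, so, using the bounded geometry of $M$ (automatic under convex cocompactness and torsion-freeness of $\Gamma$), I would transfer the classical Euclidean Hardy--Littlewood--Sobolev inequality to $M$ via a partition of unity and normal coordinate charts, recovering the sharp bound at $\alpha = n(1/q_1 - 1/q_2)$.

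The main technical obstacle is precisely this sharp Euclidean endpoint for the local part, which is out of reach of the Kunze-Stein tools of Proposition \ref{Kunze-Stein} and Lemma \ref{B(f,g)}: those estimates degenerate as $q_1 \to 1$ or $q_2 \to \infty$, exactly where the critical Sobolev inequality lives. An alternative avoiding the chart argument is Stein's complex interpolation along the imaginary-power family $\{(-\Delta)^{is}\}_{s \in \mathbb R}$, whose $L^p(M)$-boundedness (with polynomial growth in $s$) can be derived from the spectral gap together with the spherical multiplier analysis underlying Proposition \ref{Kunze-Stein}; interpolating between the trivial bound at $\mathrm{Re}\,\alpha = 0$ and a bound at $\mathrm{Re}\,\alpha = n/q_1$ then yields the Hardy--Littlewood--Sobolev endpoint. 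Either way, the key ingredients -- spectral gap on $M$, Kunze-Stein phenomenon, and convex cocompactness through Lemma \ref{estimate of Poincaré series} -- are already in place.
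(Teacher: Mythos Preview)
The paper does not actually prove this theorem: immediately after stating it, the author writes ``We refer to \cite{triebel1992theory} for more details about function spaces on Riemannian manifolds'' and moves on. In other words, the intended argument is the standard Sobolev embedding on complete Riemannian manifolds of bounded geometry (which $M=\Gamma\backslash X$ certainly is, $\Gamma$ being torsion-free and discrete), where the result is classical and requires none of the spectral or convex-cocompact machinery of Sections~\ref{section preliminairies}--\ref{section main section}.

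Your proposal is therefore a genuinely different route: you try to manufacture the embedding out of the paper's own tools (Proposition~\ref{Kunze-Stein}, Lemma~\ref{estimate of Poincaré series}, Lemma~\ref{B(f,g)}). This is an interesting idea, but it is heavier than what is needed and, as written, leaves a real gap. Interpolating the $L^2(M)\to L^2(M)$ bound from Proposition~\ref{Kunze-Stein} with the $L^1(M)\to L^\infty(M)$ bound coming from Lemma~\ref{estimate of Poincaré series} only yields $L^{q'}(M)\to L^{q}(M)$ estimates along the duality line, not the full range $L^{q_1}(M)\to L^{q_2}(M)$ with arbitrary $1<q_1\le q_2<\infty$; in particular pairs with $2<q_1<q_2$ (or $q_1<q_2<2$) are not reached by your global-piece argument. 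Your fallback via Stein interpolation of $(-\Delta)^{is}$ does not close this either, since you never specify a usable endpoint at $\re\alpha=n/q_1$ other than restating the problem. The clean fix is exactly the one the paper implicitly invokes: treat the embedding as a local statement on a manifold of bounded geometry and quote \cite{triebel1992theory}, rather than routing it through the Kunze--Stein phenomenon and the assumption $\delta(\Gamma)<\rho$, which play no role here.
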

We refer to \cite{triebel1992theory} for more details about function spaces on Riemannian manifolds. Let us state next the Strichartz estimate and some applications. The proofs are straightforwardly adapted from \cite{anker2014wave} and are therefore omitted.
\subsection{Strichartz estimate}\label{section Strichartz} { \ } \par
Recall the linear inhomogenous Klein-Gordon equation on $M$:
\begin{align}\label{Cauchy problem}
\begin{cases}
\partial_{t}^2 u(t,x) + D^2 u(t,x) = F(t,x), \\
u(0,x) =f(x) , \ \partial_{t}|_{t=0} u(t,x) =g(x).
\end{cases}
\end{align}
whose solution is given by Duhamel's formula:
\begin{align*}
u(t,x) 
= \left( \cos tD \right) f(x)
+ \frac{\sin tD}{D} g(x)
+ \int_{0}^{t} \frac{\sin (t-s) D}{D} F(s,x)ds.
\end{align*}

We consider first the case $n \ge 4$ and discuss the $2$-dimensional and $3$-dimensional cases in the final remarks. Recall that a couple $(p,q)$ is called admissible if $\left( \frac{1}{p}, \frac{1}{q} \right)$ belongs to the triangle
\begin{align*}
\left\lbrace \left( \frac{1}{p}, \frac{1}{q} \right) \in \left( 0, \frac{1}{2} \right) \times \left( 0, \frac{1}{2} \right) \ \Big| \ \frac{1}{p} \ge \frac{n-1}{2} \left( \frac{1}{2} - \frac{1}{q} \right) \right\rbrace \bigcup \left\lbrace  \left( 0, \frac{1}{2} \right),  \left( \frac{1}{2}, \frac{1}{2} - \frac{1}{n-1} \right) \right\rbrace.
\end{align*}
\vspace{-0.5cm}
\begin{figure}[!h]\label{n >4}
\centering
\begin{tikzpicture}[scale=0.47][line cap=round,line join=round,>=triangle 45,x=1.0cm,y=1.0cm]
\draw[->,color=black] (0.,0.) -- (8.805004895364174,0.);
\foreach \x in {,2.,4.,6.,8.,10.}
\draw[shift={(\x,0)},color=black] (0pt,-2pt);
\draw[color=black] (8,0.08808504628984362) node [anchor=south west] {\large $\frac{1}{p}$};
\draw[->,color=black] (0.,0.) -- (0.,8);
\foreach \y in {,2.,4.,6.,8.}
\draw[shift={(0,\y)},color=black] (-2pt,0pt);
\draw[color=black] (0.11010630786230378,8) node [anchor=west] {\large $\frac{1}{q}$};
\clip(-5,-2) rectangle (11.805004895364174,9.404473957611293);
\fill[line width=2.pt,color=ffqqqq,fill=ffqqqq,fill opacity=0.15000000596046448] (0.45855683542994374,5.928142080369191) -- (5.85410415683891,5.8847171522290775) -- (5.864960388873938,4.136863794589543) -- cycle;
\draw [line width=1.pt,dash pattern=on 5pt off 5pt,color=ffqqqq] (0.,6.)-- (6.,6.);
\draw [line width=1.pt,dash pattern=on 5pt off 5pt,color=ffqqqq] (6.,6.)-- (6.,4.);
\draw [line width=1.pt,color=ffqqqq] (6.,4.)-- (0.,6.);
\draw [line width=0.8pt] (6.,4.)-- (6.,0.);
\draw [->,line width=0.5pt,color=ffqqqq] (1.8569381602310457,-0.8701903243695805) -- (2.891937454136701,4.811295161325333);
\begin{scriptsize}
\draw [fill=ffqqqq] (0.,6.) circle (5pt);
\draw[color=black] (-0.5709441083587729,6) node {\large $\frac{1}{2}$};
\draw[color=black] (-1.5,4) node {\large $\frac{1}{2}- \frac{1}{n-1}$};
\draw[color=black] (-0.5,-0.2) node {\large $0$};
\draw[color=black] (6.2,-0.7) node {\large $\frac{1}{2}$};
\draw[color=ffqqqq] (1.8,-1.3) node {\large $\frac{1}{p} = \frac{n-1}{2} \left( \frac{1}{2} - \frac{1}{q} \right)$};
\draw [color=ffqqqq] (6.,6.) circle (5pt);
\draw [fill=ffqqqq] (6.,4.) circle (5pt);
\end{scriptsize}
\end{tikzpicture}
\caption{Admissibility in dimension $n \ge 4$.}
\end{figure}
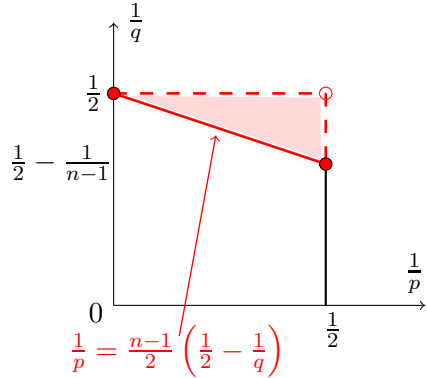

\begin{theorem}\label{thm strichartz}
Let $(p,q)$ and $(\tilde{p}, \tilde{q})$ be two admissible couples, and let
\begin{align*}
\sigma \ge \frac{n+1}{2} \left( \frac{1}{2} - \frac{1}{q} \right) \ and \ \widetilde{\sigma} \ge \frac{n+1}{2} \left( \frac{1}{2} - \frac{1}{\tilde{q}} \right).
\end{align*}
Then all solutions $u$ to the Cauchy problem \eqref{Cauchy problem} satisfy the  following Strichartz estimate:
\begin{align}\label{Strichartz}
\left\| \nabla_{\mathbb{R} \times M} u \right\|_{L^p(I; H^{-\sigma,q}(M))} \lesssim \left\| f \right\|_{H^1(M)} + \left\| g \right\|_{L^2(M)} + \left\| F \right\|_{L^{\tilde{p}'}(I; H^{\tilde{\sigma},\tilde{q}'}(M))}.
\end{align}
\end{theorem}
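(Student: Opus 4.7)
The plan is to apply the classical $TT^*$ method combined with the dispersive estimate of Theorem \ref{dispersive estimate}, adapting the approach of \cite{anker2014wave}. By Duhamel's formula the solution of \eqref{Cauchy problem} splits into a free part and an inhomogeneous part, so the Strichartz bound \eqref{Strichartz} will follow once I establish the homogeneous estimate
\begin{equation*}
\bigl\|\widetilde{D}^{-\sigma} e^{itD} h\bigr\|_{L^p_t(I; L^q_x(M))} \lesssim \|h\|_{L^2(M)}
\end{equation*}
for every admissible $(p,q)$ with $\sigma \ge \tfrac{n+1}{2}(\tfrac{1}{2}-\tfrac{1}{q})$. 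Setting $Th = \widetilde{D}^{-\sigma} e^{itD} h$, the $TT^*$ identity reduces this to the boundedness of
\begin{equation*}
F \;\longmapsto\; \int_{\mathbb{R}} \widetilde{D}^{-2\sigma} e^{i(t-s)D} F(s,\cdot)\, ds
\end{equation*}
from $L^{p'}_t L^{q'}_x$ to $L^p_t L^q_x$. Applying Minkowski's inequality in $t$ together with Theorem \ref{dispersive estimate} fiberwise in $x$ further reduces the matter to the scalar convolution estimate $\|K * G\|_{L^p(\mathbb{R})} \lesssim \|G\|_{L^{p'}(\mathbb{R})}$, where
\begin{equation*}
K(t) = |t|^{-\alpha}\,\mathbf{1}_{\{|t|<1\}} + |t|^{-3/2}\,\mathbf{1}_{\{|t|\ge 1\}}, \qquad \alpha := (n-1)\bigl(\tfrac{1}{2} - \tfrac{1}{q}\bigr).
\end{equation*}

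Split $K = K_0 + K_\infty$ into small-time and large-time parts. Then $K_0 \in L^{1/\alpha,\infty}(\mathbb{R})$ (and $K_0 \in L^r$ for every $r < 1/\alpha$), while $K_\infty \in L^r(\mathbb{R})$ for every $r > 2/3$ because $3/2 > 1$. By Young's convolution inequality, the desired bound holds whenever $K \in L^{p/2}(\mathbb{R})$, which amounts to $\alpha < 2/p$ (since $p/2 > 2/3$ is automatic for $p \ge 2$); this is precisely the strict admissibility $\tfrac{1}{p} > \tfrac{n-1}{2}(\tfrac{1}{2} - \tfrac{1}{q})$, covering the interior of the triangular admissible region. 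The lower edge $\tfrac{1}{p} = \tfrac{n-1}{2}(\tfrac{1}{2} - \tfrac{1}{q})$ (the Euclidean admissibility line) is treated by Hardy-Littlewood-Sobolev, using the weak-$L^{1/\alpha}$ membership of $K_0$. The sharp endpoint vertex $(1/p, 1/q) = (1/2, 1/2 - 1/(n-1))$, where $\alpha = 1$ and $K_0$ just fails to be integrable near zero, is handled by the Keel-Tao bilinear interpolation, exploiting the trivial $L^2$ bound $\|e^{itD}\|_{L^2 \to L^2} = 1$ alongside the two-regime dispersive estimate. The excluded right edge reflects the fact that the large-time decay $|t|^{-3/2}$ is no faster than in the critical Schr\"odinger-type regime, exactly as on rank one symmetric spaces in \cite{anker2014wave}.

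Finally, the inhomogeneous Strichartz inequality \eqref{Strichartz} with possibly distinct admissible pairs $(p,q)$ and $(\tilde p,\tilde q)$ on the two sides is deduced from the bilinear $TT^*$ estimate, combined with the Christ-Kiselev lemma to restrict the Duhamel integral to the triangular region $\{s < t\}$ whenever the relevant retarded exponents are not self-dual. The main obstacle I anticipate is the endpoint $(1/2, 1/2 - 1/(n-1))$: the Keel-Tao endpoint argument is standard in spirit but must be verified in the locally symmetric setting, and it is there that the interplay between the Kunze-Stein phenomenon (Proposition \ref{Kunze-Stein}) and the large-time decay is most delicate. Everything else—tracking the regularity exponent $\sigma$ and combining the contributions of $K_0$ and $K_\infty$—is routine bookkeeping once the convolution estimates above are in place.
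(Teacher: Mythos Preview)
Your proposal is correct and follows exactly the route the paper indicates (the paper omits the proof and refers to \cite{anker2014wave}, whose $TT^*$ scheme you have reproduced): reduce to the one-dimensional convolution with $K=K_0+K_\infty$, handle the interior by Young, the Euclidean edge by Hardy--Littlewood--Sobolev, the Keel--Tao vertex $(\tfrac12,\tfrac12-\tfrac{1}{n-1})$ by the abstract endpoint theorem using the $L^1\!\to\!L^\infty$ bound (which is established inside the proof of Theorem~\ref{dispersive estimate}) together with $L^2$ unitarity, and finally invoke Christ--Kiselev for the retarded inhomogeneous term.

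One correction to a side remark: your explanation of why the \emph{right edge} $\{p=2,\ q<2\tfrac{n-1}{n-3}\}$ is excluded is off. On the symmetric space $X$ treated in \cite{anker2014wave} that edge \emph{is} obtained, with the same large-time decay $|t|^{-3/2}$, so the obstruction on $M$ is not the large-time rate. As the paper explains in the remark after Theorem~\ref{thm strichartz}, the missing ingredient on $M$ is the off-diagonal small-time dispersive estimate $\|\widehat{W_t^{\sigma}}\|_{L^{\tilde q'}\to L^{q}}\lesssim |t|^{-(n-1)\max(\tfrac12-\tfrac1q,\tfrac12-\tfrac1{\tilde q})}$, which the Keel--Tao bilinear argument needs for those non-endpoint $p=2$ pairs; this would follow from an $L^q$ bound on the kernel $\widehat{\omega_t^{\sigma}}$, presently unknown. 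The endpoint vertex itself requires only the diagonal $L^1\!\to\!L^\infty$ estimate, which you do have, so no ``delicate interplay'' with the Kunze--Stein phenomenon arises there---Proposition~\ref{Kunze-Stein} has already done its work inside Theorem~\ref{dispersive estimate}.
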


\begin{remark}
In comparison with hyperbolic spaces , observe that we loose the right edge of the admissible triangle. The reason is that the standard $TT^*$ method used to prove the Strichartz estimate breaks down in the critical case where $p=2$ and $q < 2\frac{n-1}{n-3}$. The dyadic decomposition method carried out in \cite{keel1998endpoint} takes care of the endpoints, but it requires a stronger dispersive property than Theorem \ref{dispersive estimate} in small time, which reads
\begin{align*}
\left\| \widehat{W_{t}^{\sigma}} \right\|_{L^{\tilde{q}'}(M) \rightarrow L^q(M)} \lesssim
|t|^{-(n-1) \max \left( \frac{1}{2} - \frac{1}{q}, \frac{1}{2} - \frac{1}{\tilde{q}} \right)}, \ \forall \ 0< |t| < 1
\end{align*}
for $n \ge 3$, $2 < q, \tilde{q} < \infty$ and $\sigma \ge (n+1) \max \left( \frac{1}{2} - \frac{1}{q}, \frac{1}{2} - \frac{1}{\tilde{q}} \right)$. Such an estimate would follow from
\begin{align*}
\left\| \widehat{\omega_{t}^{\sigma}} \right\|_{L^q(M)} \lesssim |t|^{- \frac{n-1}{2}}, \ \forall 0 < |t| <1
\end{align*}
for $\sigma \ge \frac{n+1}{2} \left( \frac{1}{2} - \frac{1}{q} \right)$ and $2 < q < + \infty$, which is unknown so far.

However, these critical points are not relevant for the following well-posedness problems, hence we obtain the same results as on real hyperbolic spaces. The admissible range in \eqref{Strichartz} can be widen by using the Sobolev embedding theorem.
\end{remark}

\begin{corollary}\label{corollary strichartz}
Let $(p,q)$ and $(\tilde{p}, \tilde{q})$ be two couples corresponding to the square
\begin{align*}
\left[ 0, \frac{1}{2} \right) \times \left(0, \frac{1}{2} \right) \bigcup \left\lbrace \left( 0, \frac{1}{2} \right), \left( \frac{1}{2}, \frac{1}{2} - \frac{1}{n-1} \right) \right\rbrace,
\end{align*}
\vspace{-0.5cm}
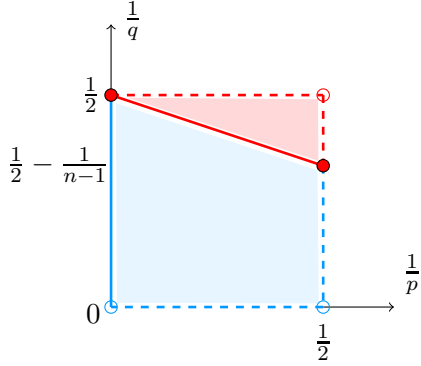
\begin{figure}[!h]
\centering
\begin{tikzpicture}[scale=0.47][line cap=round,line join=round,>=triangle 45,x=1.0cm,y=1.0cm]
\draw[->,color=black] (6.,0.) -- (8,0.);
\foreach \x in {,1.,2.,3.,4.,5.,6.,7.,8.,9.,10.,11.}
\draw[shift={(\x,0)},color=black] (0pt,-2pt);
\draw[color=black] (8,0.1) node [anchor=south west] {\large $\frac{1}{p}$};
\draw[->,color=black] (0.,6.) -- (0.,8);
\foreach \y in {,1.,2.,3.,4.,5.,6.,7.}
\draw[shift={(0,\y)},color=black] (-2pt,0pt);
\draw[color=black] (0.1,8.1) node [anchor=west] {\large $\frac{1}{q}$};
\clip(-4,-2) rectangle (11.868487651785372,7.652018714581741);
\fill[line width=2.pt,color=ffqqqq,fill=ffqqqq,fill opacity=0.15000000596046448] (0.45855683542994374,5.928142080369191) -- (5.85410415683891,5.8847171522290775) -- (5.864960388873938,4.136863794589543) -- cycle;
\fill[line width=2.pt,color=qqzzff,fill=qqzzff,fill opacity=0.10000000149011612] (0.13636385492209063,5.773698170331237) -- (5.837359246154291,3.8701454719028483) -- (5.876010062366035,0.10169089125781497) -- (0.16535196708089844,0.12101629936368694) -- cycle;
\draw [line width=1.pt,dash pattern=on 3pt off 3pt,color=ffqqqq] (0.,6.)-- (6.,6.);
\draw [line width=1.pt,dash pattern=on 3pt off 3pt,color=ffqqqq] (6.,6.)-- (6.,4.);
\draw [line width=1.pt,color=ffqqqq] (6.,4.)-- (0.,6.);
\draw [line width=1.pt,color=qqzzff] (0.,6.)-- (0.,0.);
\draw [line width=1.pt,dash pattern=on 3pt off 3pt,color=qqzzff] (6.,4.)-- (6.,0.);
\draw [line width=1.pt,dash pattern=on 3pt off 3pt,color=qqzzff] (6.,0.)-- (0.,0.);
\begin{scriptsize}
\draw [fill=ffqqqq] (0.,6.) circle (5pt);
\draw[color=black] (-0.5709441083587729,6) node {\large $\frac{1}{2}$};
\draw [color=ffqqqq] (6.,6.) circle (5pt);
\draw [fill=ffqqqq] (6.,4.) circle (5pt);
\draw [color=qqzzff] (0.,0.) circle (5pt);
\draw[color=black] (-0.5,-0.2) node {\large $0$};
\draw [color=qqzzff] (6.,0.) circle (5pt);
\draw[color=black] (6,-1) node {\large $\frac{1}{2}$};
\draw[color=black] (-1.5,4) node {\large $\frac{1}{2}- \frac{1}{n-1}$};
\end{scriptsize}
\end{tikzpicture}
\vspace{-0.5cm}
\caption{Case $n \ge 4$}
\end{figure}

\noindent
Let $\sigma, \tilde{\sigma} \in \mathbb{R}$ such that $\sigma \ge \sigma(p,q)$, where
\begin{align*}
\sigma(p,q) =& \frac{n+1}{2} \left( \frac{1}{2} - \frac{1}{q} \right) + \max \left\lbrace 0, \frac{n-1}{2} \left( \frac{1}{2} - \frac{1}{q} \right) - \frac{1}{p} \right\rbrace,
\end{align*}
and similarly $\tilde{\sigma} \ge \sigma(\tilde{p}, \tilde{q})$. Then the Strichartz estimate \eqref{Strichartz} holds for all solutions to the Cauchy problem \eqref{Cauchy problem}.
\end{corollary}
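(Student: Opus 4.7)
The plan is to reduce the corollary to Theorem~\ref{thm strichartz} by trading the extra admissibility region against Sobolev regularity via \eqref{embedding}. Given an arbitrary $(p,q)$ lying in the enlarged square, I would distinguish two cases according to whether or not $(p,q)$ already belongs to the admissible triangle.

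If $1/p \ge \tfrac{n-1}{2}(1/2 - 1/q)$, then the $\max$ in the definition of $\sigma(p,q)$ vanishes and $\sigma(p,q) = \tfrac{n+1}{2}(1/2 - 1/q)$, so Theorem~\ref{thm strichartz} applies verbatim to $(p,q)$. Otherwise, I would introduce the auxiliary exponent $q_0 \in (2,q)$ defined by $\tfrac{1}{p} = \tfrac{n-1}{2}\bigl(\tfrac12 - \tfrac{1}{q_0}\bigr)$, so that $(p,q_0)$ lies exactly on the edge of the admissible triangle, together with $\sigma_0 = \tfrac{n+1}{2}\bigl(\tfrac12 - \tfrac{1}{q_0}\bigr)$. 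By Theorem~\ref{thm strichartz}, the solution $u$ of \eqref{Cauchy problem} satisfies the Strichartz estimate with target norm $L^p(I;H^{-\sigma_0,q_0}(M))$. I would then apply the Sobolev embedding \eqref{embedding} in the spatial variable to pass from $H^{-\sigma_0,q_0}(M)$ to $H^{-\sigma,q}(M)$; this requires $\sigma \ge \sigma_0 + n(1/q_0 - 1/q)$, and a direct computation using $\sigma_0 = \tfrac{n+1}{n-1}\cdot\tfrac{1}{p}$ and $\tfrac{1}{q_0} = \tfrac12 - \tfrac{2}{(n-1)p}$ gives
\begin{equation*}
\sigma_0 + n\bigl(\tfrac{1}{q_0} - \tfrac{1}{q}\bigr)
= -\tfrac{1}{p} + n\bigl(\tfrac12 - \tfrac{1}{q}\bigr)
= \tfrac{n+1}{2}\bigl(\tfrac12 - \tfrac{1}{q}\bigr) + \tfrac{n-1}{2}\bigl(\tfrac12 - \tfrac{1}{q}\bigr) - \tfrac{1}{p}
= \sigma(p,q).
\end{equation*}
Thus the precise formula for $\sigma(p,q)$ is tailored so that the Sobolev loss exactly matches the distance from $(1/p,1/q)$ to the admissibility line.

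The argument on the inhomogeneous side is entirely dual: if $(\tilde p,\tilde q)$ is not admissible, I would introduce $\tilde q_0 < \tilde q$ with $\tfrac{1}{\tilde p} = \tfrac{n-1}{2}\bigl(\tfrac12 - \tfrac{1}{\tilde q_0}\bigr)$, apply Theorem~\ref{thm strichartz} with the admissible pair $(\tilde p,\tilde q_0)$ on the right-hand side, and then use the embedding $H^{\tilde\sigma,\tilde q'}(M)\subset H^{\tilde\sigma_0,\tilde q_0'}(M)$ (which is legitimate since $\tilde q_0 \le \tilde q$ implies $\tilde q' \le \tilde q_0'$) to dominate $\|F\|_{L^{\tilde p'}(I;H^{\tilde\sigma_0,\tilde q_0'}(M))}$ by $\|F\|_{L^{\tilde p'}(I;H^{\tilde\sigma,\tilde q'}(M))}$. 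The same arithmetic shows that the required exponent threshold coincides with $\sigma(\tilde p,\tilde q)$. Combining the two halves (applied independently on the target side and on the forcing side, possibly mixing one admissible choice with one non-admissible one) yields the corollary.

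There is no real obstacle here beyond book-keeping: the result is a formal consequence of Theorem~\ref{thm strichartz}, the Sobolev embedding \eqref{embedding}, and the arithmetic identity above. The only points that deserve a line of verification are that the auxiliary exponent $q_0$ indeed lies in the admissible range $(2,\infty)$ (which uses that $(1/p,1/q)$ lies in the square with $q>2$, and the endpoint case $p=\infty$ forces $q_0 = 2$, handled by $H^{0,2}=L^2$), and that the enlarged region in the figure is nothing but the set of $(1/p,1/q)$ for which such a $q_0$ exists. Since the author indicates that the proof is adapted from \cite{anker2014wave} without modification, I would write the argument concisely as the case analysis above, without reproducing the $TT^*$ machinery behind Theorem~\ref{thm strichartz}.
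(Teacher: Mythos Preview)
Your proposal is correct and follows exactly the approach indicated in the paper: the corollary is obtained from Theorem~\ref{thm strichartz} by widening the admissible range via the Sobolev embedding \eqref{embedding}, which is precisely the reduction you carry out by moving $(p,q)$ (and dually $(\tilde p,\tilde q)$) onto the admissibility edge at the cost of the extra regularity encoded in $\sigma(p,q)$. The arithmetic you display matches the required Sobolev loss, and this is the argument from \cite{anker2014wave} that the paper defers to.
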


\begin{remark}
Theorem \ref{thm strichartz} and Corollary \ref{corollary strichartz} still hold true in lower dimension $n=3$ and $n=2$ with similar proofs. In particular, the endpoint $(p,q) = (2, \infty)$ is excluded and the admissible set in dimension $2$ becomes
\begin{align*}
\left\lbrace \left( \frac{1}{p}, \frac{1}{q} \right) \in \left( 0, \frac{1}{2} \right) \times \left( 0, \frac{1}{2} \right) \ \Big| \ \frac{1}{p} > \frac{1}{2} \left( \frac{1}{2} - \frac{1}{q} \right) \right\rbrace \bigcup \left\lbrace  \left( 0, \frac{1}{2} \right) \right\rbrace,
\end{align*}
and the region in Corollary \ref{corollary strichartz} is
\begin{align*}
\left\lbrace \left( \frac{1}{p}, \frac{1}{q} \right) \in \left( 0, \frac{1}{4} \right) \times \left( 0, \frac{1}{2} \right) \ \Big| \ \frac{1}{p} \le \frac{1}{2} \left( \frac{1}{2} - \frac{1}{q} \right) \right\rbrace.
\end{align*}
\end{remark}

\vspace{-0.46cm}
\begin{figure}[!h]
\begin{subfigure}[t]{0.5\textwidth}
\centering
\begin{tikzpicture}[scale=0.5][line cap=round,line join=round,>=triangle 45,x=1.0cm,y=1.0cm]
\draw[->,color=black] (6.,0.) -- (8,0.);
\foreach \x in {,1.,2.,3.,4.,5.,6.,7.,8.,9.,10.,11.,12.,13.}
\draw[shift={(\x,0)},color=black] (0pt,-2pt);
\draw[color=black] (8,0.1) node [anchor=south west] {\large $\frac{1}{p}$};
\draw[->,color=black] (0.,0.) -- (0.,8);
\foreach \y in {,1.,2.,3.,4.,5.,6.,7.,8.}
\draw[shift={(0,\y)},color=black] (-2pt,0pt);
\draw[color=black] (0.1,8.1) node [anchor=west] {\large $\frac{1}{q}$};
\clip(-2,-2) rectangle (13.119347003417166,8.080526271073529);
\fill[line width=2.pt,color=ffqqqq,fill=ffqqqq,fill opacity=0.15000000596046448] (0.23947996857607806,5.9156932292089195) -- (5.85410415683891,5.8847171522290775) -- (5.896883481046051,0.25828971673893225) -- cycle;
\fill[line width=2.pt,color=qqzzff,fill=qqzzff,fill opacity=0.10000000149011612] (0.08350079729924402,5.793380424802772) -- (0.11455249049619687,0.12106762660967957) -- (5.704457330452569,0.11429587297972885) -- cycle;
\draw [line width=1.pt,dash pattern=on 3pt off 3pt,color=ffqqqq] (0.,6.)-- (6.,6.);
\draw [line width=1.pt,dash pattern=on 3pt off 3pt,color=ffqqqq] (6.,6.)-- (6.,0.);
\draw [line width=1.pt,color=ffqqqq] (6.,0.)-- (0.,6.);
\draw [line width=1.pt,color=qqzzff] (0.,6.)-- (0.,0.);
\draw [line width=1.pt,dash pattern=on 3pt off 3pt,color=qqzzff] (6.,0.)-- (0.,0.);
\begin{scriptsize}
\draw [fill=ffqqqq] (0.,6.) circle (5pt);
\draw[color=black] (-0.5709441083587729,6) node {\large $\frac{1}{2}$};
\draw [color=ffqqqq] (6.,6.) circle (5pt);
\draw [color=qqzzff] (6.,0.) circle (5pt);
\draw [color=qqzzff] (0.,0.) circle (5pt);
\draw[color=black] (-0.5,-0.2) node {\large $0$};
\draw[color=black] (6,-1) node {\large $\frac{1}{2}$};
\end{scriptsize}
\end{tikzpicture}
\end{subfigure}
~ 
\begin{subfigure}[t]{0.5\textwidth}
\centering
\begin{tikzpicture}[scale=0.5][line cap=round,line join=round,>=triangle 45,x=1.0cm,y=1.0cm]
\draw[->,color=black] (6.,0.) -- (8,0.);
\foreach \x in {,1.,2.,3.,4.,5.,6.,7.,8.,9.,10.,11.,12.,13.}
\draw[shift={(\x,0)},color=black] (0pt,-2pt);
\draw[color=black] (8,0.1) node [anchor=south west] {\large $\frac{1}{p}$};
\draw[->,color=black] (0.,0.) -- (0.,8);
\foreach \y in {,1.,2.,3.,4.,5.,6.,7.}
\draw[shift={(0,\y)},color=black] (-2pt,0pt);
\draw[color=black] (0.1,8.1) node [anchor=west] {\large $\frac{1}{q}$};
\clip(-4,-2) rectangle (13.0320877855373,7.819160947044588);
\fill[line width=2.pt,color=qqzzff,fill=qqzzff,fill opacity=0.10000000149011612] (0.049288803201222935,5.589067834924169) -- (0.09973660476186558,0.1051097243515092) -- (2.8205713524874143,0.10510972435150921) -- cycle;
\fill[line width=2.pt,color=ffqqqq,fill=ffqqqq,fill opacity=0.10000000149011612] (0.16242526408151014,5.876115136959884) -- (5.888713400893983,5.846660512380616) -- (5.88724066966502,0.1317269783881381) -- (3.1126305298318058,0.13982700014743682) -- cycle;
\draw [line width=1.pt,dash pattern=on 3pt off 3pt,color=ffqqqq] (0.,6.)-- (6.,6.);
\draw [line width=1.pt,dash pattern=on 3pt off 3pt,color=ffqqqq] (6.,6.)-- (6.,0.);
\draw [line width=1.pt,color=qqzzff] (0.,6.)-- (0.,0.);
\draw [line width=1.pt,color=qqzzff] (0.,6.)-- (3.,0.);
\draw [line width=1.pt,dash pattern=on 3pt off 3pt,color=qqzzff] (0.,0.)-- (3.,0.);
\draw [line width=1.pt,dash pattern=on 3pt off 3pt,color=ffqqqq] (3.,0.)-- (6.,0.);
\begin{scriptsize}
\draw [fill=ffqqqq] (0.,6.) circle (5pt);
\draw[color=black] (-0.5709441083587729,6) node {\large $\frac{1}{2}$};
\draw [color=ffqqqq] (6.,6.) circle (5pt);
\draw [color=ffqqqq] (6.,0.) circle (5pt);
\draw [color=qqzzff] (0.,0.) circle (5pt);
\draw[color=black] (-0.5,-0.2) node {\large $0$};
\draw[color=black] (6,-1) node {\large $\frac{1}{2}$};
\draw[color=black] (3,-1) node {\large $\frac{1}{4}$};
\draw [color=ffqqqq] (3.,0.) circle (5pt);
\end{scriptsize}
\end{tikzpicture}
\end{subfigure}
\vspace{-0.75cm}
\caption{Cases $n=3$ and $n=2$}
\end{figure}
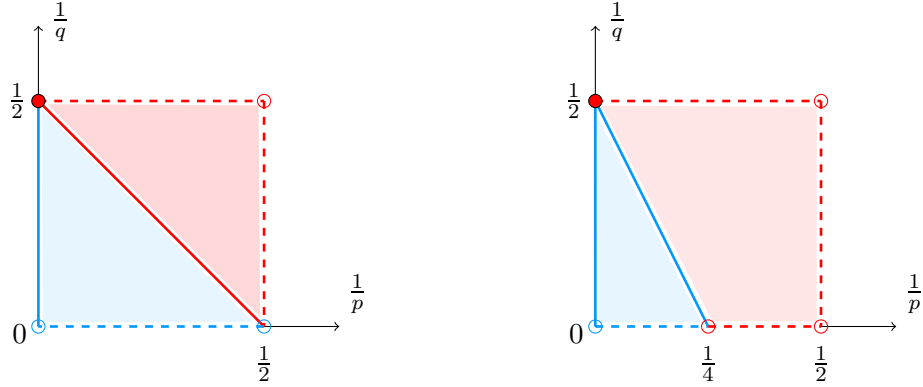

\subsection{Global well-posedness in $L^{p} \left( \mathbb{R}, L^q (M) \right)$}\label{section GWP} ~ \par
We refer to \cite{anker2014wave} for more detailed proofs of the following well-posedness results. By using the classical fixed point scheme with the previous Strichartz estimates, one obtains the global well-posedness for the semilinear equation
\begin{align}\label{semilinear equation}
\begin{cases}
\partial_{t}^2 u(t,x) + D^2 u(t,x) = F(u(t,x)), \\
u(0,x) =f(x) , \ \partial_{t}|_{t=0} u(t,x) =g(x).
\end{cases}
\end{align}
on $M$ with power-like nonlinearities $F$ satisfying
\begin{align*}
|F(u)| \le C |u|^{\gamma}, \quad |F(u)-F(v)| \le C \left(|u|^{\gamma-1} + |v|^{\gamma -1} \right) |u-v|, \quad \gamma >1.
\end{align*}
and small initial data $f$ and $g$. Assume that $n \ge 3$, and consider the following powers
\begin{align*}
\gamma_1 = 1+ \frac{3}{n}, \ \gamma_2 = 1 + \frac{2}{\frac{n-1}{2}+ \frac{2}{n-1}}, \ \gamma_c = 1+ \frac{4}{n-1}, \\
\gamma_3 = 
\begin{cases}
\frac{\frac{n+6}{2} + \frac{2}{n-1} + \sqrt{4n + \left( \frac{6-n}{2}+\frac{2}{n-1} \right)^2}}{n} \quad &if \ n \le 5, \\
1 + \frac{2}{\frac{n-1}{2} - \frac{1}{n-1}}\quad &if \ n \ge 6,
\end{cases} \\
\gamma_4 = 
\begin{cases}
1 + \frac{4}{n-2} \quad &if \ n \le 5, \\
\frac{n-1}{2} + \frac{3}{n+1} - \sqrt{\left( \frac{n-3}{2} + \frac{3}{n+1} \right)^2 - 4 \frac{n-1}{n+1}}  &if \ n \ge 6,
\end{cases}
\end{align*}
and the following curves
\begin{align*}
\sigma_1(\gamma) = \frac{n+1}{4} - \frac{(n+1)(n+5)}{8n} \frac{1}{\gamma - \frac{n+1}{2n}}, \\
\sigma_2(\gamma) = \frac{n+1}{4} - \frac{1}{\gamma -1}, \quad 
\sigma_3(\gamma) = \frac{n}{2} - \frac{2}{\gamma -1}.
\end{align*}
Denote by $0^{+}$ any small positive constant. In dimension $n \ge 3$, the equation \eqref{semilinear equation} is globally well-posed for small initial data in $H^{\sigma,2}(M) \times H^{\sigma - 1,2}(M)$ provided that
\begin{align}\label{GWP conditions}
\begin{cases}
\sigma = 0^{+}, \ &if \ 1< \gamma \le \gamma_1, \\
\sigma = \sigma_1 (\gamma), \ &if \ \gamma_1< \gamma \le \gamma_2, \\
\sigma = \sigma_2 (\gamma), \ &if \ \gamma_2< \gamma \le \gamma_c, \\
\sigma = \sigma_3 (\gamma), \ &if \ \gamma_c< \gamma \le \gamma_4,
\end{cases}
\end{align}

Similar results hold in dimension $2$, see \cite{anker2014wave}. Observe that one obtains the same global well-posedness results on $M$ as on real hyperbolic spaces, without further assumptions. In comparison with the Euclidean setting, this is a consequence of the larger admissible set for the Strichartz estimate.
\bibliographystyle{abbrv}
\bibliography{ZHANG - Wave and Klein-Gordon equations on locally symmetric spaces 2nd version - 2018 .bib}
\end{document}